\title
[]
{Evolution of spacelike surfaces in anti-De Sitter space by their Lagrangian angle}
\author[Knut Smoczyk
]{\sc Knut Smoczyk
}
\address{
Leibniz Universit\"at Hannover\newline
Institut f\"ur Differentialgeometrie\newline
Welfengarten 1\newline
30167 Hannover\newline
Germany}
\email{smoczyk@math.uni-hannover.de} 
\thanks{}
\subjclass[2010]{Primary 53C44; 
}   
\keywords{}
\date{July 6., 2011} 
\def\real     #1{{\mathbb R^{#1}}}
\def\dd       #1#2#3{{#1}_{#2#3}}
\def\ddd      #1#2#3#4{{#1}_{#2#3#4}}
\def\dddd     #1#2#3#4#5{{#1}_{#2#3#4#5}}
\def\uu       #1#2#3{{#1}^{#2#3}}
\def\ud       #1#2#3{{#1}^{#2}_{\phantom{#2}{#3}}}
\def\du       #1#2#3{{#1}_{#2}^{\phantom{#2}{#3}}}
\def\uddd     #1#2#3#4#5{#1^#2_{\phantom{#2}#3#4#5}}
\def\hdd      #1#2#3#4{{#1}^{~^{\hspace{-5pt}(#2)}}_{{#3#4}}}
\def\dt       {\frac{d}{dt}\,}
\def\equationcolor {\color{black}}
\def\textcolor     {\color{black}}
\def\sca #1#2{\left\langle {#1},{#2}\right\rangle_{~_{\hspace{-5pt}2,4}}}
\def\scb #1#2{\left\langle {#1},{#2}\right\rangle_{~_{\hspace{-5pt}N}}}
\def\scc #1#2{\left\langle {#1},{#2}\right\rangle_{~_{\hspace{-5pt}1,3}}}
\def\ads {\operatorname{AdS}_3}
\def\hodge{\ast\hspace{-3pt}}
\def\bcoleq    {\begin{equation}\equationcolor}
\def\ecoleq    {\textcolor\end{equation}}
\def\bcoleqn   {\equationcolor\begin{eqnarray}}
\def\ecoleqn   {\end{eqnarray}\textcolor}
\newtheorem{theorem}{Theorem}[section]   
\newtheorem{lemma}[theorem]{Lemma}   
\newtheorem{corollary}[theorem]{Corollary}   
\newtheorem{remark}[theorem]{Remark}   
\theoremstyle{definition}   
\newtheorem{definition}[theorem]{Definition}
\newcommand{\bfig}{\begin{figure}}
\newcommand{\efig}{\end{figure}}
\begin{document}

\begin{abstract}
We study spacelike hypersurfaces in anti-De Sitter spacetime that evolve by the Lagrangian angle of their Gau\ss\ maps.
\end{abstract}
\maketitle
\section{Introduction}
In Lorentzian manifolds  spacelike hypersurfaces of prescribed
curvature (mean, scalar, Gau\ss\ etc.) are of great interest. To prove existence
of such hypersurfaces, many authors use either elliptic or parabolic methods. 
E.g. in \cite{Ecker1}, \cite{Ecker2}, \cite{Ecker-Huisken} mean curvature flow and variants of it have been used to generate spacelike hypersurfaces of prescribed mean curvature.

One advantage of the parabolic method 
is, that one can often prove existence of solutions merely under reasonable geometric assumptions
on the initial hypersurface - imposed  in terms of algebraic expressions involving the
second fundamental form - and e.g. the existence of barriers might not be needed.
In this paper we will see that there exists another interesting geometric flow for spacelike
hypersurfaces in anti-de Sitter spacetime, where the Gau\ss\ maps of the hypersurfaces move by the Lagrangian mean curvature flow. Our aim is to prove the following two theorems:

\begin{theorem}\label{theo main1}
Let $(N,g)$ be a time-oriented Lorentzian manifold of signature $(n,1)$ and of constant sectional curvature
$-\kappa<0$. Suppose $F_0:M\to N$ is a smooth spacelike immersion of a closed $n$-dimensional
manifold $M$.
There exists $T>0$ and a smooth solution $F:M\times[0,T)\to N$
of the evolution equation
\begin{eqnarray}
\dt F(p,t)&=&\phi(p,t)\nu(p,t)\,,\quad\forall p\in M,\,\forall t\in[0,T)\,,\label{eq bas1}\\
F(p,0)&=&F_0(p)\,,\quad\forall p\in M\,,\nonumber
\end{eqnarray}
where $\nu(p,t)$ denotes the future directed timelike unit normal at $F(p,t)$ and $\phi:M\to\real{}$
is the function
\begin{equation}\label{def f}
\phi=\frac{1}{\sqrt{\kappa}}\sum_{i=1}^n\arctan\frac{\lambda_i}{\sqrt{\kappa}}
\end{equation}
with $\lambda_1\le\dots\le\lambda_n$ denoting the principal curvatures of the immersion.
In particular, despite the fact that $\lambda=(\lambda_1,\dots,\lambda_n):M\to\real{n}$ is merely continuous,
the function $\phi$ is smooth.
\end{theorem}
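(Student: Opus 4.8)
The plan is to establish the smoothness of $\phi$ first, since it is precisely what makes the evolution problem well posed, and then to set up \eqref{eq bas1} as a parabolic system and invoke standard short-time existence.

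For the smoothness, I would realize the Lagrangian angle as a branch of a complex argument. Let $A$ be the shape operator of the immersion, a smooth field of $g$-selfadjoint endomorphisms whose eigenvalues are $\lambda_1\le\cdots\le\lambda_n$, and set
\[
z:=\det\Bigl(\mathrm{Id}+\tfrac{i}{\sqrt\kappa}A\Bigr)=\prod_{j=1}^n\Bigl(1+\tfrac{i}{\sqrt\kappa}\lambda_j\Bigr).
\]
The coefficients of the characteristic polynomial of $A$ are the elementary symmetric functions of the $\lambda_j$, which are polynomials in the entries of $A$; hence $z$ is a smooth (indeed real-analytic) $\mathbb C$-valued function of $A$. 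Because $|z|^2=\prod_j(1+\lambda_j^2/\kappa)\ge1$, the function $z$ never vanishes. On the other hand each $\arctan(\lambda_j/\sqrt\kappa)$ is continuous on $M$ (eigenvalues depend continuously on $A$), so $\sqrt\kappa\,\phi=\sum_j\arctan(\lambda_j/\sqrt\kappa)$ is a genuine single-valued continuous function with values in $(-n\pi/2,n\pi/2)$, and it is a continuous determination of $\arg z$. Since a continuous branch of the argument of a nowhere-vanishing smooth function is itself smooth, $\phi$ is smooth. The point is that the symmetric combination $\sum_j\arctan(\lambda_j/\sqrt\kappa)$ is a smooth function of the matrix $A$ even though the individual $\lambda_j$ are only continuous, and there is no monodromy obstruction because $\phi$ is a true real-valued function rather than an $\mathbb R/2\pi\mathbb Z$-valued one.

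For existence I would exploit that \eqref{eq bas1} is invariant under tangential reparametrizations, so only the normal component is geometric. Representing the evolving hypersurfaces as normal graphs $F(\cdot,t)=\exp_{F_0}\bigl(u(\cdot,t)\,\nu_0\bigr)$ over $F_0$ (or, equivalently, performing a DeTurck modification) reduces the system to a single fully nonlinear scalar equation for the height $u$, in which $\phi$ enters as a smooth function of the $1$- and $2$-jets of $u$ through the induced metric and the second fundamental form $h$. The decisive structural fact is the monotonicity
\[
\frac{\partial\phi}{\partial\lambda_i}=\frac{1}{\sqrt\kappa}\cdot\frac{\sqrt\kappa}{\kappa+\lambda_i^2}=\frac{1}{\kappa+\lambda_i^2}>0,
\]
so that the derivative of $\phi$ with respect to $h$ is represented, in an eigenbasis of $A$, by the positive-definite diagonal matrix with entries $(\kappa+\lambda_i^2)^{-1}$. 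With the sign fixed by the future-directed timelike normal $\nu$, the linearization of the scalar equation is then a uniformly elliptic second-order operator, i.e. the flow is strictly parabolic along $F_0$, and short-time existence and uniqueness of a smooth solution on some $[0,T)$ follow from the standard theory of fully nonlinear parabolic equations on the closed manifold $M$ (linearization together with the inverse function theorem in parabolic H\"older spaces, followed by parabolic bootstrapping).

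I expect the main obstacle to be the bookkeeping of signs in the Lorentzian setting: because $\nu$ is timelike, the Gau\ss\ and Weingarten equations differ in sign from the Riemannian case, so one must check that the positivity $\partial\phi/\partial\lambda_i>0$ really yields a forward-parabolic rather than backward linearization for the given orientation of $\nu$. Once parabolicity is confirmed in the correct direction, the remaining analysis is routine, and the smoothness of $\phi$ established above guarantees that the nonlinearity is smooth, so that the bootstrap produces a smooth, and not merely H\"older, solution.
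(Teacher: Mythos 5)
Your proposal is correct and takes essentially the same route as the paper: your $z=\det\bigl(\mathrm{Id}+\tfrac{i}{\sqrt\kappa}A\bigr)$ is exactly the paper's pair of alternating sums of elementary symmetric functions ($z=b+ia$ with $|z|^2=\prod_j(1+\lambda_j^2/\kappa)\ge 1$), and your ``continuous branch of the argument of a nowhere-vanishing smooth function is smooth'' step is precisely what the paper proves by hand via the local functions $\arctan(\tilde a/\tilde b)$. The existence step also matches: the paper deduces parabolicity from the positive definiteness of $\partial\phi/\partial h_{ij}$ (the inverse of $\sigma_{ij}=\kappa g_{ij}+h_{i}^{\ l}h_{lj}$) and then cites standard parabolic theory, of which your graph/DeTurck reduction and Schauder bootstrap are just the spelled-out version.
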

\begin{theorem}\label{theo main2}
Under the same assumptions as in Theorem \ref{theo main1} assume in addition $n=2$ and that
the Gau\ss\ curvature $K=\lambda_1\lambda_2$ of the initial surface
satisfies
\begin{equation}\label{est 1}
|K|<\kappa\,.
\end{equation}
Then this condition is preserved during the flow, a smooth solution of (\ref{eq bas1}) exists for all $t\in[0,\infty)$ and $F$ converges smoothly and exponentially to a 
spacelike limit surface with vanishing mean curvature $H=\lambda_1+\lambda_2=0$ and with $-\kappa\le K<\kappa$ as $t\to \infty$. 
\end{theorem}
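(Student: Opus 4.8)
The plan is to run a maximum-principle argument on quantities built from the Lagrangian angles and to exploit one special structural feature of this flow: a pointwise bound on $K$ will, here, force a genuine bound on the full second fundamental form. Write $\theta_i:=\arctan(\lambda_i/\sqrt\kappa)\in(-\tfrac\pi2,\tfrac\pi2)$, so that $\sqrt\kappa\,\phi=\theta_1+\theta_2$ and, since $\tan\theta_1\tan\theta_2=K/\kappa$ and $\cos(\theta_1\pm\theta_2)=\cos\theta_1\cos\theta_2(1\mp K/\kappa)$,
\[
|K|<\kappa \quad\Longleftrightarrow\quad v_+:=\cos(\theta_1+\theta_2)>0 \ \text{ and }\ v_-:=\cos(\theta_1-\theta_2)>0 .
\]
Short-time existence and the smoothness of $\phi$ are supplied by Theorem~\ref{theo main1}. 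The engine of the proof is the set of evolution equations under $\partial_tF=\phi\nu$. Moving the surface along the future timelike normal $\nu$ obeys the Riccati/Jacobi equation of the ambient; because the normal geodesics in a Lorentzian space form of curvature $-\kappa$ focus like spacelike directions in positive curvature, one computes $\partial_s\theta_i=-\sqrt\kappa$ for \emph{each} $i$. Consequently I expect
\[
\partial_t\theta_i=a^{kl}\nabla_k\nabla_l\theta_i+(\text{first order in }\nabla\theta)-\sqrt\kappa\,\phi,\qquad a^{kl}=\tfrac{\partial\phi}{\partial h_{kl}}>0,
\]
with the \emph{same} reaction $-\sqrt\kappa\,\phi$ in both components (and $a^{kl}$ having principal entries $(\kappa+\lambda_i^2)^{-1}$, which confirms parabolicity).

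Taking the sum and difference isolates two genuinely different behaviours. For $w_+:=\theta_1+\theta_2=\sqrt\kappa\,\phi$ the reaction adds up to $-2\,w_+$, so
\[
\partial_t\phi=a^{kl}\nabla_k\nabla_l\phi+(\text{first order})-n\,\phi,\qquad n=2 ,
\]
and at an interior spatial extremum of $\phi$ (where $\nabla\phi=0$ and $a^{kl}\nabla_k\nabla_l\phi\le0$) the maximum principle gives $\tfrac{d}{dt}\sup_M|\phi|\le -n\sup_M|\phi|$, hence $\sup_M|\phi|(t)\le e^{-nt}\sup_M|\phi|(0)$. For $w_-:=\theta_1-\theta_2$ the reaction cancels, so $w_-$ solves a parabolic equation with no zeroth order term; passing to the smooth quantity $v_-=\cos w_-$ (which, unlike $w_-$, is regular across umbilic points) yields $\partial_tv_-=a^{kl}\nabla_k\nabla_lv_-+(\text{first order})+\big(a^{kl}\nabla_kw_-\nabla_lw_-\big)v_-$ with nonnegative reaction, and likewise $\partial_tv_+=\cdots+\big(a^{kl}\nabla_kw_+\nabla_lw_++2w_+\sin w_+\big)v_+$ with $2w_+\sin w_+\ge0$. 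The maximum principle therefore makes $\min_Mv_\pm(\cdot,t)$ nondecreasing; since $M$ is closed we start with a uniform gap, so $v_\pm\ge v_0>0$ for all $t$ and the condition $|K|<\kappa$ is preserved.

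This uniform gap is exactly what controls the geometry: from $v_\pm\ge v_0$ and $v_\mp\le1$ we get $|\theta_1\pm\theta_2|\le\arccos v_0<\tfrac\pi2$, whence $|\theta_i|\le\arccos v_0<\tfrac\pi2$ and $|\lambda_i|=\sqrt\kappa\,|\tan\theta_i|\le\sqrt\kappa\,\tan(\arccos v_0)=:C_0$, so $|A|^2=\lambda_1^2+\lambda_2^2\le2C_0^2$ uniformly. A pointwise bound on the Gau\ss\ curvature---normally far too weak to bound $|A|$---suffices here precisely because both $\theta_1+\theta_2$ and $\theta_1-\theta_2$ are trapped away from $\pm\tfrac\pi2$. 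With $|A|$ uniformly bounded the metrics stay uniformly spacelike, parabolic Schauder and Krylov--Safonov estimates bootstrap to uniform $C^k$ bounds, and the flow extends as long as it exists; hence $T=\infty$.

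Convergence and its rate now come for free from the reaction term. Since $\sup_M|\phi|\le e^{-nt}\sup_M|\phi|(0)\to0$, the speed $|\partial_tF|=|\phi|$ is exponentially integrable, so $F(\cdot,t)$ converges in $C^\infty$ (using the uniform bounds) to a smooth spacelike limit $F_\infty$ with $\phi\equiv0$, i.e. $H=\lambda_1+\lambda_2=0$; the preserved bound then gives $K=-\lambda_1^2\in[-\kappa,\kappa)$, and all higher derivatives of $\phi$ decay exponentially as well, yielding exponential $C^k$-convergence. The main obstacle is the rigorous derivation of the evolution equations above, i.e. the Simons-type identity adapted to the timelike normal and to the $\arctan$ structure of $\phi$; the decisive point---which is also what restricts the sharp statement to $n=2$---is verifying that the reaction term is exactly $-\sqrt\kappa\,\phi$ in \emph{every} angle component, for this is what simultaneously produces the $-n\phi$ reaction driving $\phi\to0$ and the cancellation leaving $\theta_1-\theta_2$ reaction-free.
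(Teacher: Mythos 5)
Your overall architecture (exponential decay of $\phi$, preservation of two cosine conditions, a $C^2$ bound from trapped angles) is close in spirit to the paper's, and the ``sum'' half is correct: $\dt\phi=\sigma^{ij}\nabla_i\nabla_j\phi-n\phi$ holds exactly (Lemma \ref{lemm opt}), giving the decay of $\phi$ and a uniform positive lower bound for $v_+=\cos(\sqrt\kappa\,\phi)$, which is the paper's estimate $\kappa-K\ge\varepsilon$. But the step you yourself single out as decisive is false: the individual angles $\theta_i$ do \emph{not} evolve with reaction exactly $-\sqrt\kappa\,\phi$. The Riccati/normal-geodesic heuristic sees only the pointwise terms $\phi(h_i^kh_{kj}-\kappa g_{ij})$ and the metric variation; it misses the curvature terms that appear when $\nabla_i\nabla_j\phi$ is converted, via Simons' identity (\ref{eq simons}) and the Gauss equation (\ref{eq gauss}), into the operator $\sigma^{kl}\nabla_k\nabla_l$ acting on a curvature function. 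These commutation terms are exactly the second reaction term in the paper's equation (\ref{eq gg7}), and for an individual angle they do not vanish: at a point where $h$ is diagonal with $\lambda_1>\lambda_2$ one finds, keeping only zeroth-order terms,
\[
\dt\theta_{1,2}=(\text{elliptic}+\text{gradient terms})\pm\tfrac12\sin\bigl(2(\theta_1-\theta_2)\bigr)-\sqrt\kappa\,\phi\,.
\]
The extra term cancels in the sum --- which is the only reason Lemma \ref{lemm opt} holds --- but it \emph{doubles} in the difference: $w_-=\theta_1-\theta_2$ has reaction $\sin(2w_-)$, not zero. Consequently $v_-=\cos w_-$ acquires, besides the gradient term you kept, the reaction $-\sin w_-\,\sin(2w_-)=-2v_-(1-v_-^2)$, so at a spatial minimum of $v_-$ with $v_-<1$ (where $\nabla w_-=0$) the reaction is nonpositive. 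Your claim that $\min_M v_-$ is nondecreasing is therefore wrong; the maximum principle yields only positivity of $v_-$ (i.e.\ $\kappa+K>0$), with a lower bound that may decay like $e^{-2t}$. With the uniform gap $v_-\ge v_0$ collapses the bound $|\theta_i|\le\arccos v_0$, hence your uniform $|A|$ bound and the whole $C^2$ estimate. (That the theorem asserts only $-\kappa\le K$, not $-\kappa<K$, for the limit surface is itself a warning that no uniform gap in $\kappa+K$ should be expected.)

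The paper closes precisely this hole by different means, and you can repair your proof by adopting them. First, it derives the evolution equation (\ref{evol kshort}) of $K$, in which both the reaction and the quadratic gradient terms (organized with the help of the Kato-type identity (\ref{eq kato2})) are proportional to $\kappa+K$, so the maximum principle preserves $\kappa+K>0$ without any gap. Second, the full second fundamental form is bounded without a gap in $\kappa+K$, using the identity $\kappa|h|^2=(\kappa-K)^2\tan^2(\sqrt\kappa\,\phi)-2\kappa K$, which needs only $K>-\kappa$, the bound $\kappa-K<2\kappa$, and $\sqrt\kappa\,|\phi|\le C<\pi/2$ from the decay estimate. With these two replacements the rest of your outline goes through, up to one more caveat: since $\phi$ is fully nonlinear and not concave in $h_{ij}$, Krylov--Safonov and Schauder theory alone do not upgrade $C^2$ to $C^{2,\alpha}$; the paper invokes Andrews' parabolic Morrey--Nirenberg estimates, which are special to two space variables --- this, and not the reaction structure, is where the restriction $n=2$ enters the regularity step.
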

\begin{remark}
We remark that the condition $|K|<\kappa$ implies that the two components of the associated Gau\ss\ map $\mathscr{G}:M\to Gr_2^+(2,4)=\mathbb{H}^2_{1/\sqrt{2}}\times\mathbb{H}^2_{1/\sqrt{2}}$ (see below) are
immersions. Another interpretation can be given in terms of the 
two-positivity of the tensor $\dd Sij=\kappa\dd gij-\du hil\dd hlj$ w.r.t. the metric
$\dd\sigma ij:=\kappa\dd gij+\du hil\dd hlj$. where $\dd gij$ resp. $\dd hij$ denote the
first resp. second fundamental tensors of $F$. For some geometric evolution equations
two-positivity can be preserved, e.g. this has been shown in \cite{Tsui-Wang}.
\end{remark}
\begin{definition}
The function $\phi$ defined in equation (\ref{def f}) will be called the Lagrangian angle.
\end{definition}
The  last definition and also the flow defined in (\ref{eq bas1}) is motivated by the following observation:
Let us consider 
the anti-De Sitter space $\ads$ as the standard model of a Lorentzian space form with constant negative sectional curvature $-1$ represented by the hypersurface 
$$\ads=\{V\in\mathbb{R}^4_2:\sca VV=-1\}$$
and equipped with the induced Lorentzian metric, where
$\mathbb{R}^4_2$ denotes $\real{4}$ with its pseudo-Riemannian metric
$$\sca VW=V^1W^1+V^2W^2-V^3W^3-V^4W^4\,.$$
The Gau\ss\ map of a spacelike surface $M\subset\operatorname{AdS}_3\subset\mathbb{R}_2^4$
can be considered as a map $\mathscr{G}:M\to Gr_2^+(2,4)$, where $Gr_2^+(2,4)$ denotes
the Grassmannian of oriented spacelike planes in $\mathbb{R}_2^4$.
Moreover, up to scaling $Gr_2^+(2,4)$ is  isometric to $\mathbb{H}^2\times\mathbb{H}^2$.
By results of Torralbo and Urbano \cite{Torralbo}, \cite{Torralbo-Urbano}, the Gau\ss\ maps are Lagrangian.
In the appendix (Lemma \ref{lemm app1}) we will show that $\phi$ is the Lagrangian angle of  the Gau\ss\ map
and we will also prove (Lemma \ref{lemm app2}) that the Gau\ss\ maps of spacelike surfaces $M$ in $\operatorname{AdS}_3$
moving by (\ref{eq bas1}) will essentially, i.e. up to some tangential deformations, evolve by the  Lagrangian mean curvature flow. 

The flow defined by (\ref{eq bas1}) has been treated in some Riemannian cases, i.e. when $(N,g)$ is a Riemannian manifold. Andrews \cite{Andrews} studied the deformation of surfaces in $S^3$ by flows that allow an optimal control of
the Gau\ss\ curvature and he detected an optimal flow with the same driving term 
$\phi$ as defined in equation (\ref{def f}),
where in his case $\lambda_1,\lambda_2$ are the two principal curvatures of the surface $M\subset S^3$.
In the same paper the following was mentioned without proof:
If one considers $M\subset S^3\subset\real{4}$
as a submanifold of $\real{4}$, then the Gau\ss\ maps 
$\mathscr{G}:M\to Gr(2,4)$ of $M$ into the Grassmannian $Gr(2,4)$ of $2$-planes in $\real{4}$
will evolve by the mean curvature flow. We remark that a detailed analysis will actually show that this holds only up to tangential deformations
of the image in $Gr(2,4)$ (compare also with Lemma \ref{lemm app2} and with
the computations in the appendix). 
On the other hand, Castro and Urbano \cite{Castro-Urbano} proved
that the Gau\ss\ maps $\mathscr{G}:M\to Gr(2,4)$ of surfaces $M\subset S^3$ are Lagrangian.
Combining the results of Andrews and Castro, Urbano we see that an evolution of surfaces $M\subset S^3$ by the function
$\phi=\arctan\lambda_1+\arctan\lambda_2$ will induce (at least up to tangential deformations)
a Lagrangian mean curvature flow of their Gau\ss\ maps $\mathscr{G}:M\to Gr(2,4)$.
In analogy to Lemma \ref{lemm app1} one can also show that $\phi$ is the Lagrangian angle of  
the Gau\ss\ map, i.e. the mean curvature $1$-form $\tau$ of the Gau\ss\ map satisfies $\tau=d\phi$.

\noindent
In another case, if $\theta=du$ is an exact $1$-form on a flat manifold
$M$ and the graph of $du$ considered as a submanifold in the cotangent bundle $N:=T^*M$ (equipped with the
flat metric)
evolves by the Lagrangian mean curvature flow, then the potential $u$ evolves by
$$\frac{\partial}{\partial t} u=\sum_{i=1}^n\arctan \lambda_i\,,$$
where $\lambda_i$ are the eigenvalues of the Hessian $D^2u$ and $D$ denotes the flat connection.
In particular, $\phi=\sum_{i=1}^n\arctan \lambda_i$ is again the Lagrangian angle.  For details see 
\cite{Smoczyk-Wang} and \cite{Smoczyk}. Recently the Lagrangian mean curvature flow has
been generalized to the case of Lagrangian submanifolds in cotangent bundles $T^*M$
of Riemannian manifolds $(M,g)$. If the Lagrangian submanifold can be represented as the graph of a closed $1$-form $\theta\in\Omega^1(M)$, then there exists a generalized Lagrangian angle
similar to the function $\phi$ defined above, where now $\lambda_k$, $k=1,\dots, n$ are the eigenvalues of the symmetric tensor $D\theta$, $D$ denoting the Levi-Civita connection of the metric on $M$. 
For details see \cite{Smoczyk-Wang2}.

The organization of this paper is as follows: In section \ref{sec 2} we will introduce our notation
and briefly recall some of the geometry in Lorentzian manifolds of constant sectional curvature.
In section \ref{sec 3} we will first study arbitrary variations of spacelike hypersurfaces in
Lorentzian manifolds of constant sectional curvature $-\kappa<0$ and we will then characterize
the flow defined in (\ref{eq bas1}) as optimal w.r.t. the reaction term in the evolution
equation of the driving function (Lemma \ref{lemm opt}). In this section we will also prove the
smoothness of $\phi$ and Theorem \ref{theo main1}. Section 4 is completely devoted to the
two-dimensional case. We will first prove uniform $C^2$- and $C^1$-estimates and can then
establish the proof of Theorem \ref{theo main2}. In the appendix we will explain the relation
between $\phi$ and the Gau\ss\ map of spacelike surfaces in $\ads$ and we show that the Gau\ss\
maps will evolve under the Lagrangian mean curvature flow, if the spacelike surfaces in $\ads$ evolve by (\ref{eq bas1}).

\section{Geometry of spacelike hypersurfaces in Lorentzian space forms}\label{sec 2}
In this section we recall some basic facts concerning the geometry of
time-oriented {Lorentzian} manifolds $(N,g)$ of signature $(n,1)$ and of constant sectional curvature
$-\kappa<0$. If $\kappa=1$ and $N$ is complete, then $(N,g)$ is called a complete anti-De Sitter structure.
Of particular interest in this paper will be the three-dimensional case.
By results of Kulkarni, Raymond \cite{Kulkarni-Raymond} and Goldman \cite{Goldman} we know
that closed $3$-manifolds with a complete anti-De Sitter structure are necessarily
orientable Seifert fibre spaces with nonzero Euler number and with hyperbolic base.
In a celebrated paper by Mess \cite{Mess} (which despite its great influence it had, was unpublished
until recently; see also the ``Notes on Mess' paper" \cite{Mess-Notes}, published in the same volume), maximal globally
hyperbolic Cauchy-compact spacetimes (called ``domains of dependence") of constant curvature in $2+1$ dimensions were studied. Recall that a globally hyperbolic (Cauchy-compact) spacetime $N$ is a spacetime admitting a (compact) spacelike hypersurface $M$ such that every inextendable timelike curve intersects $M$ exactly once and such that the order relation is given by the existence of isometric embeddings.
Mess gave a classification in the flat and anti-De Sitter cases. The De Sitter case was also studied by Mess but a classification was obtained later by Scannell \cite{Scannell}.
In the anti-De Sitter case, domains of dependence are quotients of convex open sets of the anti-De Sitter space, by discrete groups of isometries. Mess exhibited a one-to-one correspondence between anti-De Sitter domains of dependence and pairs of points in the Teichm\"uller space. This result has been extended to $2+1$-dimensional anti-De Sitter domains of dependence having only a complete Cauchy surface by Barbot \cite{Barbot}
and Benedetti and Bonsante \cite{Benedetti-Bonsante}.
Locally, any Lorentz three-manifold with complete anti-De Sitter structure is isometric to
the classical model $\ads$ (or likewise to its simply connected
universal cover).

Suppose now that $F:M\to N$ is a smooth spacelike immersion of an $n$-dimensional oriented manifold $M$ into
a time-oriented spacetime of constant sectional curvature $-\kappa$, $\kappa>0$. To describe
the geometry of $(M, F^*g)$ and $(N,g)$ we will often use local coordinate systems $(U,x,\Omega)$ and $(V,y,\Lambda)$ 
for $M$ resp. $N$ where we assume here and in the following:
\begin {enumerate}[i)]
\item
$U\subset M$ is an open set around some point $p\in M$ and $x:U\to\Omega$ is a diffeomorphism between $U$ and some open set $\Omega\subset\real{n}$.
\item
$V\subset N$ is an open set around the point $q:=F(p)\in N$ and $y:V\to\Lambda$ is a diffeomorphism between $V$ and some open set $\Lambda\subset\real{n+1}$.
\item
The coordinate systems are always chosen in such a way that $F(U)\subset V$.
\item
For the set of coordinates on $M$ we will use Latin indices, i.e. $x=(x^i)_{i=1,\dots,n}$. Similarly, we will use Greek indices for the
coordinates on $N$, i.e. $y=(y^\alpha)_{\alpha=1,\dots,n+1}$.
\end{enumerate}
In these local coordinates, geometric quantities on $M$ and $N$ will then often be distinguished simply 
by use of Latin or Greek indices, e.g. $g=\dd g\alpha\beta dy^\alpha\otimes dy^\beta$ and $F^*g=\dd gijdx^i\otimes dx^j$ 
will denote  the Lorentzian resp. the induced Riemannian
metric tensors on $N$ resp. $M$. We also define $F^\alpha(x):=(y^\alpha\circ F)(x)$
and using the Einstein summation convention we have
$$\dd gij=\dd g\alpha\beta F^\alpha_iF^\beta_j\,,$$
where $F^\alpha_i:=\partial F^\alpha/\partial x^i$.
Let $\nabla$ denote the Levi-Civita connection on $(M,F^*g)$. The second fundamental tensor $A$ is by definition
$A=\nabla dF$, where 
$$dF=F^\alpha_i\frac{\partial}{\partial y^\alpha}\otimes dx^i\in\Gamma(F^*TN\otimes T^*M)$$ 
is the differential of $F$. The second fundamental tensor $h=\dd hij dx^i\otimes dx^j$ w.r.t. the future directed timelike unit normal 
$\nu=\nu^\alpha\frac{\partial}{\partial y^\alpha}$ along $F(M)$
is given by $h=-g(A,\nu)$ and can be expressed in local coordinates by Gau\ss' formula
\begin{equation}\label{eq gaussformula}
\nabla_i F^\alpha_j=A^\alpha_{ij}=\dd hij\nu^\alpha=\frac{\partial^2 F^\alpha}{\partial x^i\partial x^j}-\Gamma^k_{ij}\frac {\partial F^\alpha}{\partial x^k}-\Gamma^\alpha_{\beta\gamma}\frac{\partial F^\beta}{\partial x^i}\frac {\partial F^\gamma}{\partial x^j}\,,
\end{equation}
where $\Gamma^k_{ij}$ and $\Gamma^\alpha_{\beta\gamma}$ are the Christoffel symbols of $\dd gij$ resp. $\dd g\alpha\beta$.
The principal curvatures $\lambda_1,\dots,\lambda_n$
at a point $p\in M$ are the eigenvalues of the Weingarten map
$$\mathscr{W}:T_pM\to T_pM\,,\quad \mathscr{W}(V)=\nabla_V\nu$$
which in local coordinates applied to a vector $V=V^i\frac{\partial}{\partial x^i}$ is given by
$$\mathscr{W}V=\ud{\mathscr{W}}kiV^i\frac{\partial}{\partial x^k}\,,\quad\ud{\mathscr{W}}ki=\ud hki=\uu gkl\dd hli\,,$$
where $(\uu gkl)_{k,l=1,\dots,n}$ denotes the inverse of $(\dd gkl)_{k,l=1,\dots,n}$ and where indices will be raised and lowered w.r.t.
the metric tensors $\dd gij$ resp. $\uu gij$.

Two other important relations are given by the Codazzi equation
\begin{equation}\label{eq codazzi}
\nabla_i\dd hjk=\nabla_j\dd hik
\end{equation}
and the Gau\ss' equation which in view of the constancy of the sectional curvatures on $(N,g)$ is
\begin{equation}\label{eq gauss}
\dddd Rijkl=-\kappa(\dd gik\dd gjl-\dd gil\dd gjk)-\dd hik\dd hjl+\dd hil\dd hjk\,.
\end{equation}
Here $\dddd Rijkl$ is the Riemannian curvature tensor w.r.t. the metric $\dd gij$ on $M$.
Using the rule for interchanging derivatives together with (\ref{eq codazzi}) and (\ref{eq gauss}) we compute
\begin{eqnarray}
\nabla_i\nabla_j\dd hkl&=&\nabla_i\nabla_k\dd hlj\nonumber\\
&=&\nabla_k\nabla_i\dd hlj+\uddd Rmlki\dd hmj+\uddd Rmjki\dd hlm\nonumber\\
&=&\nabla_k\nabla_l\dd hij+\uddd Rmlki\dd hmj+\uddd Rmjki\dd hlm\,.\label{eq simons}
\end{eqnarray}
We will use (\ref{eq simons}) in the sequel (the trace  is known as Simons' identity).

\section{Variations of spacelike hypersurfaces in Lorentzian space forms}\label{sec 3}
In this section assume that for some $T>0$ we are given a smooth family of spacelike immersions
$$F:M\times[0,T)\to N$$
such that
\begin{equation}\label{eq evol1}
\dt F(p,t)=f(p,t)\nu(p,t)\,,\quad\forall p\in M,\,\forall\, t\in[0,T)\,,
\end{equation}
where $f(\cdot,t)$ is a smooth function depending smoothly on the principal curvatures $\lambda_1,\dots,\lambda_n$ of the
immersed hypersurface $M_t:=F(M,t)$, and where $\nu(p,t)$ is the future directed timelike unit normal at $F(p,t)$.

The evolution equations for the first and second fundamental forms are described in the next Lemma, a proof of which
can be found in the literature (c.f. \cite{Gerhardt}, see also \cite{Ecker-Huisken})
\begin{lemma}[Evolution equations]\label{evolution equations}
The evolution equations of the first and second fundamental form of spacelike hypersurfaces in Lorentzian manifolds
of constant sectional curvature $-\kappa$ evolving by (\ref{eq evol1}) are
\begin{eqnarray}
\dt\dd gij&=&2f\dd hij\,,\label{evst 2}\\
\dt\dd hij&=&\nabla_i\nabla_j f+f\du hik\dd hjk-\kappa f\dd gij\,.\label{evst 5}
\end{eqnarray}
\end{lemma}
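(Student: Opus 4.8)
The plan is to obtain both equations by differentiating the defining geometric identities in time and then simplifying with the standard structural relations, namely metric compatibility and freedom of torsion of the ambient connection $\overline\nabla$, the Gau\ss\ decomposition $\overline\nabla_i F_j = \Gamma^k_{ij}F_k + h_{ij}\nu$, the Weingarten relation $\overline\nabla_i\nu = h_i^k F_k$ (which has no normal component since $g(\nu,\nu)=-1$ is constant), and the constant-curvature form of the ambient curvature tensor $\overline R(X,Y)Z = -\kappa\big(g(Y,Z)X - g(X,Z)Y\big)$. Two elementary facts I would use repeatedly are $g(\nu,\nu)=-1$, $g(\nu,F_i)=0$, together with the interchange $\overline\nabla_{\partial_t}F_i = \overline\nabla_{\partial_i}(\partial_t F)$, valid because $[\partial_t,\partial_i]=0$ and $\overline\nabla$ is torsion-free. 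For the first fundamental form this is immediate: differentiating $g_{ij}=g(F_i,F_j)$ and using $\overline\nabla_t F_i = \overline\nabla_i(f\nu) = (\partial_i f)\nu + f h_i^k F_k$, the normal parts die against $F_j$ and one is left with $\partial_t g_{ij} = f h_i^k g_{kj} + f h_j^k g_{ki} = 2f h_{ij}$, which is (\ref{evst 2}).

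The substantive part is the second fundamental form. Starting from $h_{ij} = -g(\overline\nabla_i F_j,\nu)$ and differentiating gives
\[
\partial_t h_{ij} = -g(\overline\nabla_t\overline\nabla_i F_j,\nu) - g(\overline\nabla_i F_j,\overline\nabla_t\nu).
\]
For the last term I first determine $\overline\nabla_t\nu$: it is purely tangential, because $g(\overline\nabla_t\nu,\nu)=\tfrac12\partial_t g(\nu,\nu)=0$, and its components are read off from $0=\partial_t g(\nu,F_j) = g(\overline\nabla_t\nu,F_j)+g(\nu,\overline\nabla_j(f\nu))$, which yields $\overline\nabla_t\nu = (\nabla^k f)F_k$. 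Pairing this with the Gau\ss\ decomposition of $\overline\nabla_i F_j$ leaves precisely $-\Gamma^k_{ij}\nabla_k f$, the term that will later promote the coordinate second derivative of $f$ to its full Hessian.

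For the first term I would commute the space and time derivatives, which is exactly where the geometry of $N$ enters:
\[
\overline\nabla_t\overline\nabla_i F_j = \overline\nabla_i\overline\nabla_t F_j + \overline R(f\nu,F_i)F_j.
\]
Expanding $\overline\nabla_i\overline\nabla_t F_j = \overline\nabla_i\big((\partial_j f)\nu + f h_j^k F_k\big)$ and taking the inner product with $\nu$, the Weingarten and Gau\ss\ relations contribute the terms $\partial_i\partial_j f$ and $f h_i^k h_{jk}$, while the constant-curvature evaluation $\overline R(\nu,F_i)F_j = -\kappa\,g_{ij}\,\nu$ produces the zeroth-order reaction term $-\kappa f g_{ij}$. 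Recombining with the tangential piece from the previous paragraph, $\partial_i\partial_j f - \Gamma^k_{ij}\nabla_k f = \nabla_i\nabla_j f$, I arrive at
\[
\partial_t h_{ij} = \nabla_i\nabla_j f + f h_i^k h_{jk} - \kappa f g_{ij},
\]
which is (\ref{evst 5}).

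The one place demanding care — and the step I would check most scrupulously — is the bookkeeping of signs and of tangential versus normal components threaded through the commutation identity. One must confirm that every spurious pairing of a tangential vector with $\nu$ vanishes, that the surviving Weingarten contraction appears as $+f h_i^k h_{jk}$ rather than its negative, and above all that the sign of the curvature term is correctly fixed by $g(\nu,\nu)=-1$: it is precisely the timelike character of the normal in the Lorentzian signature that converts the ambient sectional curvature $-\kappa$ into the term $-\kappa f g_{ij}$ with the stated sign. An error in any of these conventions would propagate into the later $C^2$-estimates, so this is where the Lorentzian setting genuinely differs from the familiar Riemannian computation.
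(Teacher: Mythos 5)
Your computation is correct, and it checks out against all of the paper's conventions. The one genuine difference is that the paper does not prove this lemma at all: it defers to the literature (Gerhardt, and Ecker--Huisken), where the proof is precisely the direct computation you perform. So your proposal supplies the content the paper omits rather than paralleling an argument in the text. Concretely, your steps are consistent with the paper's sign conventions: with $h_{ij}=-g(A_{ij},\nu)$, $g(\nu,\nu)=-1$ and the Weingarten relation $\overline\nabla_i\nu=h_i^k F_k$ (the paper's $\mathscr{W}^k_i=h^k_i$), one gets $\partial_t g_{ij}=2fh_{ij}$ immediately; for the second fundamental form your two delicate signs are right, namely $\overline\nabla_t\nu=+(\nabla^k f)F_k$ (opposite to the Riemannian case, because $g(\nu,\overline\nabla_tF_j)=-\partial_j f$ rather than $+\partial_j f$), and $-g\bigl(f\overline R(\nu,F_i)F_j,\nu\bigr)=f\kappa g_{ij}\,g(\nu,\nu)=-\kappa f g_{ij}$, where the timelike normal again flips the sign. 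Note also that your ambient curvature convention $\overline R(X,Y)Z=-\kappa\bigl(g(Y,Z)X-g(X,Z)Y\bigr)$ is the one with sectional curvature $-\kappa$ and is applied directly to the pullback commutation identity, so you never need the paper's Gau\ss\ equation (\ref{eq gauss}) (whose index/sign convention differs from yours by an overall sign); this keeps your argument self-contained and free of any hidden inconsistency.
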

For a smooth function $f$ depending on the eigenvalues $(\lambda_i)_{i=1,\dots,n}$
of the second fundamental form  let us define the tensors
$$\uu fij:=\frac{\partial f}{\partial \dd hij}\,,\quad f^{ij,kl}:=\frac{\partial^2f}{\partial\dd hkl\partial\dd hij}\,.$$
Then
\begin{eqnarray}
\nabla_i\nabla_j f&=&\nabla_i\left(\uu fkl\nabla_j\dd hkl\right)\nonumber\\
&=&\uu fkl\nabla_i\nabla_j\dd hkl+f^{kl,pq}\nabla_i\dd hpq\nabla_j\dd hkl\,.\nonumber
\end{eqnarray}
If we insert this into (\ref{evst 5}) and use (\ref{eq gauss}), (\ref{eq simons}), then we get
\begin{eqnarray}
\dt\dd hij&=&\uu fkl\nabla_k\nabla_l\dd hij+f^{kl,pq}\nabla_i\dd hpq\nabla_j\dd hkl\nonumber\\
&&-\kappa\uu fkl(\dd hjk\dd gil-\dd hij\dd gkl+\dd hkl\dd gij-\dd hil\dd gjk)\nonumber\\
&&+\uu fkl(-\du hjm\dd hmk\dd hil+\du him\dd hmj\dd hkl-\du hkm\dd hml\dd hij+\du  him\dd hml\dd hjk)\nonumber\\
&&+f\du hik\dd hjk-\kappa f\dd gij\,.\label{eq evol h2}
\end{eqnarray}
If $G$ is another smooth function that depends smoothly on the principal curvatures, we may consider $G$ as a function of the tensors $\dd hij$ and $\uu gij$, so that
for example 
\begin{eqnarray}
\nabla_jG
&=&\frac{\partial G}{\partial\dd hkl}\nabla_j\dd hkl+\frac{\partial G}{\partial\uu gkl}\nabla_j\uu gkl\nonumber\\
&=&\frac{\partial G}{\partial\dd hkl}\nabla_j\dd hkl\label{eq hypgrad}
\end{eqnarray}
and
\begin{eqnarray}
\nabla_i\nabla_jG&=&\frac{\partial^2G}{\partial x^i\partial x^j}-\Gamma^k_{ij}\frac{\partial G}{\partial x^k}\nonumber\\
&=&\frac{\partial G}{\partial\dd hkl}\nabla_i\nabla_j\dd hkl+\frac{\partial^2 G}{\partial\dd hkl\partial \dd hpq}\nabla_i\dd hkl\nabla_j\dd hpq\,.\label{eq hyphess}
\end{eqnarray}
Likewise
\begin{eqnarray}
\dt G&=&\frac{\partial G}{\partial\dd hij}\dt\dd hij+\frac{\partial G}{\partial\uu gij}\dt\uu gij\,.\label{eq hypG1}
\end{eqnarray}
Let us define
$$\uu Gkl:=\frac{\partial G}{\partial \dd hkl}\,,\quad G^{kl,pq}:=\frac{\partial^2 G}{\partial\dd hpq\partial\dd hkl}\,.$$
Then combining (\ref{evst 2}) and (\ref{evst 5})  one gets
\begin{lemma}\label{lemm g}
Under the flow (\ref{eq evol1}) the evolution equation of an arbitrary smooth function $G$,
depending smoothly on the eigenvalues $\lambda_1,\dots,\lambda_k$ of the Weingarten map
$\mathscr{W}$ is
\begin{eqnarray}
\dt G-\uu fij\nabla_i\nabla_j G&=&\uu Gij\uu fkl(\nabla_i\nabla_j\dd hkl-\nabla_k\nabla_l\dd hij)\nonumber\\
&&+\left(\uu Gij f^{pq,kl}-\uu fij G^{pq,kl}\right)\nabla_i\dd hkl\nabla_j\dd hpq\nonumber\\
&&+f\left(\uu Gij(\dd hil\ud hlj-\kappa \dd gij)-2\frac{\partial G}{\partial\uu gkl}\uu hkl\right)\,.\label{eq hypG2}
\end{eqnarray}
\end{lemma}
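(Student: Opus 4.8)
The plan is to establish (\ref{eq hypG2}) by a direct computation, starting from the decomposition (\ref{eq hypG1}) of $\dt G$ together with the Hessian formula (\ref{eq hyphess}), and treating the dependence of $G$ on the first and second fundamental forms separately. First I would record the evolution of the inverse metric: differentiating the identity $\uu gik\dd gkj=\delta^i_j$ and inserting (\ref{evst 2}) gives $\dt\uu gij=-2f\uu hij$. Substituted into (\ref{eq hypG1}), the $\uu gij$-dependence of $G$ then contributes exactly the reaction term $-2f\,\frac{\partial G}{\partial\uu gkl}\uu hkl$ on the right-hand side of (\ref{eq hypG2}).

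For the $\dd hij$-dependence the clean choice is to \emph{not} pass to the Simons-processed form (\ref{eq evol h2}), but to insert the raw evolution equation (\ref{evst 5}) directly into $\uu Gij\dt\dd hij$. The zeroth-order piece $f\du hik\dd hjk-\kappa f\dd gij$ of (\ref{evst 5}), contracted with $\uu Gij$ and using the symmetry of $\dd hij$, immediately produces $f\,\uu Gij(\dd hil\ud hlj-\kappa\dd gij)$. It then remains to combine the Hessian terms: expanding $\nabla_i\nabla_j f=\uu fkl\nabla_i\nabla_j\dd hkl+f^{kl,pq}\nabla_i\dd hpq\nabla_j\dd hkl$ as in the text, and $\nabla_i\nabla_j G$ via (\ref{eq hyphess}), the difference $\uu Gij\nabla_i\nabla_j f-\uu fij\nabla_i\nabla_j G$ splits into a pure second-derivative part and a gradient-squared part.

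The one place where care is needed is the index bookkeeping in the second-derivative part. The term from $\uu Gij\nabla_i\nabla_j f$ is $\uu Gij\uu fkl\nabla_i\nabla_j\dd hkl$, in which $G$ contracts the two differentiation indices, whereas the term from $-\uu fij\nabla_i\nabla_j G$ is $-\uu fij\uu Gkl\nabla_i\nabla_j\dd hkl$, which after relabelling the dummy indices equals $-\uu Gij\uu fkl\nabla_k\nabla_l\dd hij$, with $f$ now contracting the differentiation indices. Their sum is precisely the commutator $\uu Gij\uu fkl(\nabla_i\nabla_j\dd hkl-\nabla_k\nabla_l\dd hij)$, i.e. the leading term of (\ref{eq hypG2}); this is exactly the object that the interchange identity (\ref{eq simons}) would convert into ambient curvature, but which (\ref{eq hypG2}) leaves unexpanded. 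Finally, relabelling the gradient-squared contributions and using the symmetries $f^{kl,pq}=f^{pq,kl}$ and $G^{kl,pq}=G^{pq,kl}$ collects them into $(\uu Gij f^{pq,kl}-\uu fij G^{pq,kl})\nabla_i\dd hkl\nabla_j\dd hpq$. Assembling the four contributions yields (\ref{eq hypG2}). I expect no genuine obstacle beyond the relabelling that turns the ordering mismatch of the two Hessians into the displayed commutator; everything else is a routine contraction.
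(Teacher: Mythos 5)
Your proposal is correct and follows essentially the same route as the paper: the text's own derivation of Lemma \ref{lemm g} consists precisely of inserting the raw evolution equations (\ref{evst 2}) and (\ref{evst 5}) into the decomposition (\ref{eq hypG1}), expanding both Hessians as in (\ref{eq hyphess}), and relabelling dummy index pairs so that the mismatch of the two second-derivative contractions appears as the commutator term, exactly as you do. Your observation that one should not substitute the Simons-processed form (\ref{eq evol h2}) is consistent with the paper, which keeps the commutator $\uu Gij\uu fkl(\nabla_i\nabla_j\dd hkl-\nabla_k\nabla_l\dd hij)$ unexpanded in (\ref{eq hypG2}).
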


We observe that for $G=f$ the first two lines on the RHS vanish so that we obtain as a corollary:
\begin{corollary}
Under the flow (\ref{eq evol1}) the evolution equation of $f$ itself is
\begin{equation}\label{eq hypf}
\dt f=\uu fij\nabla_i\nabla_j f+f\left(\uu fij(\dd hil\ud hlj-\kappa \dd gij)-2\frac{\partial f}{\partial\uu gkl}\uu hkl\right)\,.
\end{equation}
\end{corollary}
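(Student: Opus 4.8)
The goal is to derive the evolution equation (\ref{eq hypf}) for $f$ itself, and the key observation is that this is simply the special case $G=f$ of the general Lemma \ref{lemm g}. So the entire proof is a matter of showing that the first two lines on the right-hand side of (\ref{eq hypG2}) vanish identically when $G$ is replaced by $f$. First I would substitute $G=f$ into (\ref{eq hypG2}), which immediately gives $\uu Gij=\uu fij$ and $G^{pq,kl}=f^{pq,kl}$.

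For the first line, after the substitution we obtain the term
\begin{equation*}
\uu fij\uu fkl(\nabla_i\nabla_j\dd hkl-\nabla_k\nabla_l\dd hij)\,.
\end{equation*}
The plan is to argue this vanishes by symmetry: the factor $\uu fij\uu fkl$ is symmetric under the simultaneous interchange of the index pairs $(ij)\leftrightarrow(kl)$, whereas the bracket $\nabla_i\nabla_j\dd hkl-\nabla_k\nabla_l\dd hij$ is antisymmetric under the very same interchange. Contracting a symmetric tensor against an antisymmetric one forces the contraction to be zero, so this line drops out. (One should note in passing that this is exactly the term controlled by Simons' identity (\ref{eq simons}), but for $G=f$ the clean symmetry argument suffices without invoking it.)

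For the second line, the substitution produces the coefficient $\uu fij f^{pq,kl}-\uu fij f^{pq,kl}$, which is identically zero before any contraction. Hence the entire second line vanishes trivially. With both the first and second lines gone, what remains is precisely the third line with $G=f$, namely
\begin{equation*}
f\left(\uu fij(\dd hil\ud hlj-\kappa \dd gij)-2\frac{\partial f}{\partial\uu gkl}\uu hkl\right)\,,
\end{equation*}
and moving the $\uu fij\nabla_i\nabla_j f$ term to the right-hand side yields exactly (\ref{eq hypf}).

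**Main obstacle.** Since the result is a direct corollary of an already-proven Lemma, there is essentially no genuine obstacle; the only point requiring care is the symmetry bookkeeping in the first line. One must verify cleanly that $\uu fij=\partial f/\partial\dd hij$ is symmetric in $(i,j)$ and that the pairwise symmetry of $\uu fij\uu fkl$ genuinely matches the pairwise antisymmetry of the bracketed commutator of second derivatives, rather than merely the individual index symmetries. This is straightforward but is the step where a sloppy index argument could go wrong.
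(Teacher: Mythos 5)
Your proposal is correct and matches the paper's own argument, which simply observes that setting $G=f$ in Lemma \ref{lemm g} makes the first two lines of (\ref{eq hypG2}) vanish. Your symmetry argument for the first line — the pair-exchange symmetry of $\uu fij\uu fkl$ against the pair-exchange antisymmetry of $\nabla_i\nabla_j\dd hkl-\nabla_k\nabla_l\dd hij$ — is exactly the justification the paper leaves implicit, and the second line indeed vanishes trivially.
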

So far, $f$ is arbitrary. If we want the flow to be parabolic, then we must assume that the tensor $\uu fij$
is positive definite. Our idea is to choose a function $f$ in such a way that the flow is parabolic and
such that the reaction terms on the RHS of (\ref{eq hypf}) simplify as much as reasonable,
e.g. so that the term in the brackets is constant. 

To make an ansatz, we first consider the case $n=1$, so that $f$ merely depends on the (mean) curvature $\lambda=\uu gkl\dd hkl$.
We then obtain
$$\uu fkl=f'\uu gkl\,,\quad\frac{\partial f}{\partial\uu gkl}=f'\dd hkl\,,$$
where $f'=\partial f/\partial \lambda$.
Hence
$$\uu fij(\dd hil\ud hlj-\kappa \dd gij)-2\frac{\partial f}{\partial\uu gkl}\uu hkl=-f'(\kappa+\lambda^2)\,.$$
The flow is parabolic if $f'>0$. Thus we are looking for a monotone increasing (in $\lambda$)
function $f$ for which
$$f'(\kappa+\lambda^2)=c$$
for some constant $c$. 
But $\kappa>0$, $f'>0$ and $\kappa+\lambda^2>0$ imply $c>0$ and then 
$$f(\lambda)=\frac{c}{\sqrt{\kappa}}\arctan\frac{\lambda}{\sqrt{\kappa}}+a\,,$$
with some arbitrary constants $a$ and $c>0$. We will choose $c=1$ and $a=0$.
For general $n$ we now simply sum over all
eigenvalues of the Weingarten map and thus define
\begin{eqnarray}
f(p):=\phi(p)=
\frac{1}{\sqrt{\kappa}}\sum_{j=1}^n\arctan\frac{\lambda_j(p)}{\sqrt{\kappa}}\,.\label{eq deff}
\end{eqnarray}

%

For each $k$ the function $\lambda_k$ is continuous but in general not smooth. Surprisingly, as the next Lemma shows, the function $\phi$ in (\ref{eq deff}) is smooth.
\begin{lemma}\label{lemm anglesmooth}
Suppose $F:M\to (N,g)$ is a smooth spacelike immersion into a time-oriented 
Lorentzian manifold of constant sectional
curvature $-\kappa<0$ and let $\lambda=(\lambda_1,\dots,\lambda_n)$ denote the principal curvature functions on $M$. The function $\phi:M\to\real{}$ defined in (\ref{eq deff}) is smooth.
\end{lemma}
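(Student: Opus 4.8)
The plan is to bypass the individual principal curvatures entirely and to realize $\phi$ as a spectral quantity of the full Weingarten map, which is smooth as a tensor field even at points where eigenvalues collide. First I would recall that the Weingarten endomorphism $\mathscr{W}$, given in coordinates by $\ud hki=\uu gkl\dd hli$, is a smooth $(1,1)$-tensor field on $M$, since it is built algebraically from the smooth tensors $\dd gij$ and $\dd hij$, and that it is self-adjoint with respect to $F^*g$, because $g(\mathscr{W}V,W)=\dd hij V^iW^j=g(V,\mathscr{W}W)$. Consequently all eigenvalues $\lambda_1,\dots,\lambda_n$ are real, and the only source of non-smoothness of the functions $\lambda_k$ is the crossing of eigenvalues. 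The idea is that, up to the factor $1/\sqrt{\kappa}$,
\[
\sqrt{\kappa}\,\phi=\sum_{k=1}^n\arctan\frac{\lambda_k}{\sqrt{\kappa}}=\operatorname{tr}\,g(\mathscr{W})\,,\qquad g(z):=\arctan\frac{z}{\sqrt{\kappa}}\,,
\]
is a symmetric, spectrally defined function of $\mathscr{W}$, which I will show is smooth by means of the holomorphic functional calculus.

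The key step is a contour-integral representation. The function $g(z)=\arctan(z/\sqrt{\kappa})$ extends holomorphically to the open strip $\{z\in\mathbb{C}:|\operatorname{Im}z|<\sqrt{\kappa}\}$, its branch points sitting at the purely imaginary values $z=\pm i\sqrt{\kappa}$. Let $p(z):=\det(z\,\mathrm{I}-\mathscr{W})$ denote the characteristic polynomial of $\mathscr{W}$; its logarithmic derivative satisfies $p'(z)/p(z)=\operatorname{tr}\bigl((z\,\mathrm{I}-\mathscr{W})^{-1}\bigr)$ and has simple poles precisely at the eigenvalues, with residues equal to their multiplicities. The residue theorem then yields
\[
\sum_{k=1}^n g(\lambda_k)=\frac{1}{2\pi i}\oint_\Gamma g(z)\,\frac{p'(z)}{p(z)}\,dz
\]
for any positively oriented contour $\Gamma$ lying inside the strip and encircling all the (real) eigenvalues. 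This formula no longer refers to the individual $\lambda_k$; it sees only the globally defined objects $p$ and $g$.

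Smoothness then follows by differentiating under the integral sign. The coefficients of $p(z)$ are the elementary symmetric polynomials of the $\lambda_k$, equivalently polynomial expressions in the entries of $\mathscr{W}$, and are therefore smooth functions on $M$. Fixing a point $p_0\in M$, continuity of $\mathscr{W}$ confines the eigenvalues to a bounded real interval on a neighbourhood $U$ of $p_0$, so a single contour $\Gamma$ — a rectangle inside the strip surrounding that interval — encircles all eigenvalues for every point of $U$. On $U$ the integrand $g(z)\,p'(z)/p(z)$ depends smoothly on the base point and continuously on $z\in\Gamma$, with no pole ever reaching $\Gamma$; hence the integral depends smoothly on the base point, and $\phi=\tfrac{1}{\sqrt{\kappa}}\operatorname{tr}g(\mathscr{W})$ is smooth. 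The hard part will be the local uniformity of this construction — guaranteeing that one contour serves an entire neighbourhood and that no eigenvalue ever approaches a branch point. The latter is automatic, since the eigenvalues are real while the branch points $\pm i\sqrt{\kappa}$ are purely imaginary, so $\Gamma$ can always be kept at a safe distance from them; the former reduces to the boundedness of the spectrum on compact sets, which is routine.
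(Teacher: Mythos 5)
Your argument is correct, and it takes a genuinely different route from the paper's. You realize $\sqrt{\kappa}\,\phi$ as $\operatorname{tr}g(\mathscr{W})$ with $g(z)=\arctan(z/\sqrt{\kappa})$ holomorphic on the strip $|\operatorname{Im}z|<\sqrt{\kappa}$, and you get smoothness from the residue identity
\[
\sum_{k=1}^n g(\lambda_k)=\frac{1}{2\pi i}\oint_\Gamma g(z)\,\frac{p'(z)}{p(z)}\,dz
\]
together with differentiation under the integral sign; the two points you flag as delicate (one contour serving a whole neighbourhood, and the spectrum staying away from the branch points $\pm i\sqrt{\kappa}$) are indeed unproblematic, since the eigenvalues of the Weingarten map are real (it is $g$-self-adjoint and the induced metric is Riemannian), locally uniformly bounded, and the coefficients of the characteristic polynomial $p$ are smooth functions on $M$. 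The paper argues instead purely real-algebraically: with $\kappa=1$ it introduces $a=\sum_k(-1)^k s_{2k+1}(\lambda)$ and $b=\sum_k(-1)^k s_{2k}(\lambda)$, the imaginary and real parts of $\prod_k(1+i\lambda_k)$, checks $a^2+b^2=\prod_k(1+\lambda_k^2)\ge 1$ so that $a,b$ never vanish simultaneously, observes that $-\arctan(b/a)$ and $\arctan(a/b)$ have the same partial derivatives as $\sum_k\arctan\lambda_k$ on the open sets $\{a\neq 0\}$ resp.\ $\{b\neq 0\}$, hence differ from it by locally constant functions, and concludes because $a\circ\lambda$ and $b\circ\lambda$ are smooth on $M$, being polynomials in the elementary symmetric functions of the principal curvatures. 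What the paper's method buys is the explicit local representation $\phi=\arctan(\tilde a/\tilde b)+\mathrm{const}$, whose two-dimensional instance $\phi=\frac{1}{\sqrt{\kappa}}\arctan\frac{\sqrt{\kappa}H}{\kappa-K}$ is reused later in the proofs of Lemma \ref{lemm kest} and Theorem \ref{theo main2}; what your method buys is generality and robustness: with no extra effort it shows that $\operatorname{tr}f(\mathscr{W})$ is smooth for \emph{any} $f$ holomorphic in a neighbourhood of the real spectrum, without needing the arctan addition law or a closed-form expression in symmetric polynomials.
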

\begin{proof}
Let $\lambda_1\le\lambda_2\le\dots\le\lambda_n:M\to\real{}$ be the principal curvature functions on $M$.
It is well-known that each $\lambda_k$ is continuous but in general not smooth. 
Hence $\phi$ is at least continuous. W.l.o.g. we may assume $\kappa=1$.
On $\real{n}$ we consider the following three smooth functions:
\begin{eqnarray}
&&\alpha:\real{n}\to\left(-\frac{n\pi}{2},\frac{n\pi}{2}\right)\,,\quad\alpha(x_1,\dots,x_n)=\sum_{k=1}^n\arctan x_k\,,\nonumber\\
&&a:\real{n}\to\real{}\,,\quad a(x_1,\dots,x_n):=\sum_{k=0}^{\left[\frac{n-1}{2}\right]}(-1)^{k}s_{2k+1}(x_1,\dots,x_n)\,,\nonumber\\
&&b:\real{n}\to\real{}\,,\quad b(x_1,\dots,x_n):=\sum_{k=0}^{\left[\frac{n}{2}\right]}
(-1)^{k}s_{2k}(x_1,\dots,x_n)\,,\nonumber
\end{eqnarray}
where $s_k$ are the elementary symmetric functions, e.g. 
$$s_0(x_1,\dots,x_n)=1\,,\quad s_1(x_1,\dots,x_n)=\sum_{k=1}^nx_k$$
and
$$s_n(x_1,\dots,x_n)=\prod_{k=1}^nx_k\,.$$
By induction one can show
$$a^2(x_1,\dots,x_n)+b^2(x_1,\dots,x_n)=\prod_{k=1}^n(1+x_k^2)\ge 1\,,$$
so that $a$ and $b$ cannot vanish simultaneously in a point $x\in\real{n}$.
Let 
$U_a:=\{x\in\real{n}:a(x)\neq 0\}$ and $U_b:=\{x\in\real{n}:b(x)\neq 0\}$. Then $U_a\cup U_b=\real{n}$ and both $U_a$ and $U_b$ are open. We obtain two smooth functions
\begin{eqnarray}
\alpha_a:U_a\to\left(-\frac{\pi}{2},\frac{\pi}{2}\right)\,,\quad\alpha_a(x):=-\arctan\frac{b(x)}{a(x)}\,,\nonumber\\
\alpha_b:U_b\to\left(-\frac{\pi}{2},\frac{\pi}{2}\right)\,,\quad\alpha_b(x):=\arctan\frac{a(x)}{b(x)}\,.\nonumber
\end{eqnarray}
Now at each $x\in U_a$ we have
$$\frac{\partial \alpha_a}{\partial x_k}=\frac{1}{1+x_k^2}=\frac{\partial\alpha}{\partial x_k}\,,$$
and likewise at any $x\in U_b$
$$\frac{\partial \alpha_b}{\partial x_k}=\frac{1}{1+x_k^2}=\frac{\partial\alpha}{\partial x_k}\,,$$
so that $\alpha_a-\alpha$ resp. $\alpha_b-\alpha$ are constant on each connected component of $U_a$ resp. $U_b$. We are now ready to prove the smoothness of $\phi$. Let $p\in M$ be arbitrary. At $p$ we must
either have $a(\lambda_1(p),\dots,\lambda_n(p))\neq 0$ or $b(\lambda_1(p),\dots,\lambda_n(p))\neq 0$.
W.l.o.g. assume $b(\lambda(p))\neq 0$ (the other case can be treated similarly). Since the 
elementary symmetric functions
$$\tilde s_k:M\to\real{}\,, \quad p\mapsto s_k(\lambda_1(p),\dots,\lambda_n(p))$$
are smooth ($\tilde s_0=1$, $\tilde s_1=H,\dots,\tilde s_n=K$), we know that $\tilde a:=a\circ\lambda$ and 
$\tilde b:=b\circ\lambda$ are smooth functions on
all of $M$ since they can be expressed in terms of the elementary symmetric functions. Choose a small open neighborhood $U\subset M$ around $p$ with $b(\lambda(q))\neq 0$ for all
$q\in U$ and such that $\lambda(q)$ lies in the same connected component of $U_b$ for any $q\in U$ (the latter works due to the continuity of $\lambda$). Then $\phi=\alpha\circ\lambda$ implies
that $\phi-\arctan\frac{\tilde a}{\tilde b}$ is a constant function on $U$. Since $\arctan\frac{\tilde a}{\tilde b}$ is smooth
on $U$, so must be $\phi$. This proves the claim.\\
\end{proof}

Let us define
\begin{equation}\label{defi sigma}
\dd\sigma ij:=\kappa\dd gij+\du hil\dd hlj\,.
\end{equation}
From the construction of $\phi$ we get
\begin{eqnarray}
\frac{\partial \phi}{\partial\dd hkl}&=&\uu\sigma kl\,,\label{eq fpart1}\\
\frac{\partial \phi}{\partial\uu gkl}&=&\dd gik\dd hjl\uu \sigma ij\,,\label{eq fpart2}\\
\phi^{pq,kl}=\frac{\partial \uu\sigma pq}{\partial\dd hkl}
&=&-(\uu\sigma pk\uu\sigma qj+\uu\sigma qk\uu\sigma pj)\ud hlj\,.\label{eq fpart4}
\end{eqnarray}
where $\uu\sigma ij$ shall denote the inverse of $\dd\sigma ij$.
\begin{eqnarray}
\end{eqnarray}

{\bf Proof of Theorem \ref{theo main1}:}
By Lemma \ref{lemm anglesmooth} the function $\phi$ is smooth and since $\uu\sigma ij=\frac{\partial \phi}{\partial\dd hij}$ is positive definite, 
the flow defined by (\ref{eq bas1}) is parabolic. The statement now follows from the standard
theory of parabolic evolution equations on smooth compact manifolds.

\hfill$\square$

Applying (\ref{eq fpart1}) and (\ref{eq fpart2}) to the general evolution equation
(\ref{eq hypf}) in case $f=\phi$ we obtain
\begin{lemma}\label{lemm opt}
Under the flow given by equation (\ref{eq bas1}) we have
\begin{equation}\label{eq evol f3}
\dt\phi=\uu\sigma ij\nabla_i\nabla_j\phi-n\phi\,.
\end{equation}
\end{lemma}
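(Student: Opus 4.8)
The plan is to substitute $f=\phi$ directly into the evolution equation (\ref{eq hypf}) of the preceding corollary and to show that the entire reaction factor collapses to the constant $-n$. Since (\ref{eq fpart1}) gives $\uu fij=\frac{\partial\phi}{\partial\dd hij}=\uu\sigma ij$, the second-order term of (\ref{eq hypf}) becomes exactly $\uu\sigma ij\nabla_i\nabla_j\phi$, which is already the leading term asserted in (\ref{eq evol f3}). It therefore remains only to evaluate the zeroth-order reaction factor
$$\uu\sigma ij(\dd hil\ud hlj-\kappa\dd gij)-2\frac{\partial\phi}{\partial\uu gkl}\uu hkl.$$

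First I would express the two curvature contractions in terms of $\dd\sigma ij$. Recalling the definition (\ref{defi sigma}), $\dd\sigma ij=\kappa\dd gij+\dd hil\ud hlj$, contraction with the inverse tensor $\uu\sigma ij$ yields the basic identity $\uu\sigma ij\dd\sigma ij=n$, i.e. $\kappa\uu\sigma ij\dd gij+\uu\sigma ij\dd hil\ud hlj=n$. This controls the first summand: the net contribution of $\uu\sigma ij(\dd hil\ud hlj-\kappa\dd gij)$ is $\uu\sigma ij\dd hil\ud hlj-\kappa\uu\sigma ij\dd gij$, the two pieces of which appear above with opposite sign.

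Next I would treat the $\uu gkl$-derivative term using (\ref{eq fpart2}). Substituting $\frac{\partial\phi}{\partial\uu gkl}=\dd gik\dd hjl\uu\sigma ij$ and contracting against $\uu hkl$, the two metric factors raise the indices of the second fundamental form; a short symmetric-tensor computation gives $\frac{\partial\phi}{\partial\uu gkl}\uu hkl=\uu\sigma ij\dd hil\ud hlj$, that is, the same $\dd hil\ud hlj$-trace appearing in the first step. Hence $-2\frac{\partial\phi}{\partial\uu gkl}\uu hkl=-2\uu\sigma ij\dd hil\ud hlj$.

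Adding the contributions, the reaction factor becomes $-\uu\sigma ij\dd hil\ud hlj-\kappa\uu\sigma ij\dd gij=-\uu\sigma ij\dd\sigma ij=-n$ by the identity above, and substituting back into (\ref{eq hypf}) with $f=\phi$ yields (\ref{eq evol f3}). The only genuinely delicate point is the index bookkeeping in the third step, namely verifying that the raising operations turn $\frac{\partial\phi}{\partial\uu gkl}\uu hkl$ into precisely $\uu\sigma ij\dd hil\ud hlj$ and not some other trace; everything else is an immediate consequence of the single contraction $\uu\sigma ij\dd\sigma ij=n$.
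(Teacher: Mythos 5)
Your proposal is correct and follows essentially the same route as the paper: substituting $f=\phi$ into (\ref{eq hypf}), using (\ref{eq fpart1}) and (\ref{eq fpart2}), and collapsing the reaction term via $\uu\sigma ij\dd\sigma ij=n$. The index computation you flag as delicate, $\frac{\partial\phi}{\partial\uu gkl}\uu hkl=\uu\sigma ij\dd hil\ud hlj$, is exactly the contraction the paper performs, so nothing is missing.
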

\begin{proof}
This follows directly from the construction of $\phi$ and likewise from
equations (\ref{eq fpart1}), (\ref{eq fpart2}):
\begin{eqnarray}
\uu fij(\dd hil\ud hlj-\kappa \dd gij)-2\frac{\partial f}{\partial\uu gkl}\uu hkl
&=&\uu\sigma ij(\dd hil\ud hlj-\kappa \dd gij)-2\dd gik\dd hjl\uu \sigma ij\uu hkl\nonumber\\
&=&-\uu\sigma ij(\kappa\dd gij+\dd hil\ud hlj)\nonumber\\
&=&-\uu\sigma ij\dd \sigma ij=-n\,.\nonumber
\end{eqnarray}
\end{proof}
A direct consequence of (\ref{eq evol f3}) and the maximum principle gives:
\begin{lemma}\label{est phi}
Under the flow given by equation (\ref{eq bas1}) the Lagrangian angle $\phi$ satisfies the  estimate:
\begin{equation}\nonumber
\inf_{q\in M}\phi(q,0)\le\phi(p,t)e^{nt}\le\sup_{q\in M}\phi(q,0)\,,\quad\forall (p,t)\in M\times[0,T)\,.
\end{equation}
\end{lemma}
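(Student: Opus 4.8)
The plan is to reduce the stated two-sided bound to a single application of the parabolic maximum principle, after first absorbing the linear reaction term $-n\phi$ in equation (\ref{eq evol f3}) by a time-dependent rescaling. To this end I would introduce the auxiliary function $\psi:M\times[0,T)\to\real{}$ defined by $\psi:=e^{nt}\phi$, the point being that the exponential factor is chosen precisely to cancel the zeroth-order term.

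First I would compute the evolution equation for $\psi$. Since $e^{nt}$ depends only on $t$, it is parallel for the spatial connection, so $\nabla_i\nabla_j\psi=e^{nt}\nabla_i\nabla_j\phi$. Differentiating in time and inserting (\ref{eq evol f3}) gives
$$\dt\psi=e^{nt}\dt\phi+ne^{nt}\phi=e^{nt}\left(\uu\sigma ij\nabla_i\nabla_j\phi-n\phi\right)+ne^{nt}\phi=\uu\sigma ij\nabla_i\nabla_j\psi\,.$$
Thus $\psi$ solves a homogeneous, purely second-order linear parabolic equation with no zeroth-order term.

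Next I would invoke parabolicity together with compactness. As established in the proof of Theorem \ref{theo main1}, the symmetric tensor $\uu\sigma ij=\partial\phi/\partial\dd hij$ is positive definite along the flow, so the operator $\uu\sigma ij\nabla_i\nabla_j$ is strictly elliptic, and $M$ is closed. The weak maximum principle on a closed manifold then yields that $t\mapsto\sup_{q\in M}\psi(q,t)$ is nonincreasing and $t\mapsto\inf_{q\in M}\psi(q,t)$ is nondecreasing, since at an interior spatial maximum one has $\nabla\psi=0$ and $\nabla_i\nabla_j\psi\le 0$, whence $\dt\psi\le 0$ there, and symmetrically at a minimum.

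Finally I would translate these monotonicities back to $\phi$. Evaluating at $t=0$ gives $\psi(\cdot,0)=\phi(\cdot,0)$, so that for every $(p,t)\in M\times[0,T)$
$$\inf_{q\in M}\phi(q,0)=\inf_{q\in M}\psi(q,0)\le\psi(p,t)=e^{nt}\phi(p,t)\le\sup_{q\in M}\psi(q,0)=\sup_{q\in M}\phi(q,0)\,,$$
which is exactly the asserted estimate. I anticipate no genuine obstacle: the only step requiring attention is the sign bookkeeping confirming that the reaction term cancels identically under the substitution $\psi=e^{nt}\phi$, after which the conclusion is a routine consequence of the maximum principle guaranteed by the parabolicity of the flow.
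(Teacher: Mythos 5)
Your proof is correct and follows essentially the same route as the paper, which simply asserts the lemma as ``a direct consequence of (\ref{eq evol f3}) and the maximum principle''; your substitution $\psi=e^{nt}\phi$ is the standard and natural way to absorb the reaction term $-n\phi$ and make that one-line argument explicit. No gaps: the cancellation computation is right, and positive definiteness of $\uu\sigma ij$ together with compactness of $M$ justifies the weak maximum principle exactly as you use it.
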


Note that the quantity $\dd gik\dd hjl\uu \sigma ij$ is symmetric in $k$ and $l$. More
generally, for any non-negative integer $r$ we define a tensor $\hdd hris$ by
$$\hdd hris:=\begin{cases}
\dd gis&, r=0\\
\dd his&, r=1\\
\hdd h{r-1}ij\ud hjs&, r\ge 2\,.
\end{cases}$$
Then we have
\begin{lemma}
For all integers $r,s\ge 0$ the following symmetry holds:
\begin{equation}\label{eq hddsymm}
\uu\sigma kl\hdd hrki\hdd hslj=\uu\sigma kl\hdd hrkj\hdd hsli\,.
\end{equation}
\end{lemma}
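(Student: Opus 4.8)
The plan is to prove (\ref{eq hddsymm}) pointwise: since both sides are smooth tensors on $M$, it suffices to verify the equality at an arbitrary fixed $p\in M$ in a conveniently chosen frame. The first step is to identify the structure of the tensors involved. A one-line induction on $r$, starting from the recursion that defines $\hdd hris$, shows that
\[
\hdd hris=\dd gim\,\ud{(\mathscr{W}^r)}ms\,,
\]
i.e. $\hdd hris$ are the components of the $r$-th power of the Weingarten map $\mathscr{W}$ with its first index lowered by $g$. In the same way (\ref{defi sigma}) reads $\dd\sigma ij=\dd gim\,\ud{P}mj$ with $P:=\kappa\,\mathrm{Id}+\mathscr{W}^2$, so that its inverse is $\uu\sigma ij=\ud{(P^{-1})}ik\,\uu gkj$. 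The point of these reformulations is that \emph{every} tensor occurring in (\ref{eq hddsymm}) is assembled from the metric $g$ and a polynomial or rational function of the single endomorphism $\mathscr{W}$.

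Next I would exploit two structural facts: that $\mathscr{W}$ is self-adjoint with respect to $g$ (because $\dd hij$ is symmetric), and that $\mathscr{W}^r$, $\mathscr{W}^s$ and $P^{-1}$ all commute, being functions of $\mathscr{W}$. Self-adjointness lets me choose, at the fixed point $p$, a $g$-orthonormal eigenbasis of $\mathscr{W}$, in which $\dd gij=\delta_{ij}$, $\ud{\mathscr{W}}ij=\lambda_i\delta^i_j$ (no sum), and therefore
\[
\hdd hrki=\lambda_k^{\,r}\,\delta_{ki}\,,\qquad \uu\sigma kl=\frac{\delta^{kl}}{\kappa+\lambda_k^2}\,.
\]
Substituting these diagonal expressions into the left-hand side of (\ref{eq hddsymm}), the three Kronecker deltas force $k=l=i$ and $i=j$, so the sum collapses to $\lambda_i^{\,r+s}/(\kappa+\lambda_i^2)$ when $i=j$ and to $0$ otherwise; the right-hand side collapses to the identical expression. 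This is the whole computation, and it establishes the symmetry.

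I do not expect a genuine obstacle here; the content lies entirely in recognizing that the claim is the symmetry, in $i$ and $j$, of the single tensor $T_{ij}:=\uu\sigma kl\hdd hrki\hdd hslj$ built from functions of one self-adjoint operator. The only point requiring a word of care is that the diagonalizing eigenframe need not depend smoothly on $p$ (the $\lambda_i$ may cross), but this is harmless because (\ref{eq hddsymm}) is tensorial and is checked at each point separately. If one wishes to sidestep frames altogether, the same result drops out of a short matrix computation: using $g\,\mathscr{W}^r=(\mathscr{W}^r)^{\!\top}g$ together with the commutativity just mentioned, one finds $T=g\,\mathscr{W}^rP^{-1}\mathscr{W}^s=g\,\mathscr{W}^sP^{-1}\mathscr{W}^r=T^{\!\top}$, and $T_{ij}=T_{ji}$ is exactly (\ref{eq hddsymm}); the mild price is keeping the index raising and lowering straight, which the diagonalization route avoids.
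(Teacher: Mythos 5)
Your proof is correct and follows essentially the same route as the paper: diagonalize $\dd hij$ (equivalently, the Weingarten map) in a $g$-orthonormal basis at the fixed point $p$, observe that all the tensors $\hdd hrij$ and $\uu\sigma ij$ become simultaneously diagonal there, and read off the symmetry. Your added remarks (the matrix identity $T=g\,\mathscr{W}^rP^{-1}\mathscr{W}^s=T^{\top}$ and the caveat about non-smooth eigenframes) are fine but not needed beyond what the paper does.
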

\begin{proof}
We choose an orthonormal basis $e_1,\dots,e_n$ at a point $p\in M$ so that $\dd hij$ becomes diagonal at $p$, i.e.
$$\dd hij=\operatorname{diag}(\lambda_1,\dots,\lambda_n)\,.$$
Then all tensors $\hdd hkij$ become diagonal at $p$ as well, more precisely
$$\hdd hkij=\operatorname{diag}(\lambda_1^k,\dots,\lambda_n^k)\,.$$
In addition we have at $p$
$$\dd\sigma ij=\operatorname{diag}(\kappa+\lambda_1^2,\dots,\kappa+\lambda_n^2)\,,\quad\uu\sigma ij=\operatorname{diag}\left(\frac{1}{\kappa+\lambda_1^2},
\dots,\frac{1}{\kappa+\lambda_n^2}\right).$$
This implies the symmetries.
\end{proof}

\section{The two-dimensional case}\label{sec 4}
In the two-dimensional case we are able to prove a longtime existence and convergence result under the assumption
that the Gau\ss\ curvature $K$ of the spacelike surface is strictly bounded by
$$-\kappa<K<\kappa.$$
Therefore, in this section let us assume $n=2$ and let $K=\lambda_1\lambda_2$ be the Gau\ss\ curvature and $H=\lambda_1+\lambda_2$ the
mean curvature. 

From
$K=\frac{1}{2}\left(H^2-|h|^2\right)$, where $|h|^2=\lambda_1^2+\lambda_2^2$
is the squared norm of the second fundamental form, and from $n=2$ we get
\begin{equation}\label{eq gg1}
\hdd h2ij=H\dd hij-K\dd gij\,.
\end{equation}
An easy computation yields
\begin{eqnarray}
\uu\sigma ij
&=&\frac{1}{\kappa H^2+(\kappa-K)^2}\left((2\kappa+|h|^2)\uu gij-\kappa\uu gij-\uu gik\uu gjl\hdd h2kl\right)\nonumber\\
&=&\frac{1}{\kappa H^2+(\kappa-K)^2}\left((2(\kappa-K)+H^2)\uu gij-\kappa\uu gij-H\uu hij+K\uu gij\right)\nonumber\\
&=&\frac{1}{\kappa H^2+(\kappa-K)^2}\left((\kappa-K+H^2)\uu gij-H\uu hij\right)\,.\label{eq gg6}
\end{eqnarray}
Moreover we compute
\begin{equation}\label{eq gg2}
\frac{\partial H}{\partial\dd hij}=\uu gij\,,\quad\frac{\partial K}{\partial\dd hij}=H\uu gij-\uu hij
\end{equation}
and
\begin{equation}\label{eq gg3}
\frac{\partial H}{\partial\uu gij}=\dd hij\,,\quad\frac{\partial K}{\partial\uu gij}=H\dd hij-\hdd h2ij=K\dd gij\,.
\end{equation}
Like in the previous sections, let $G$ denote an arbitrary function depending smoothly on the eigenvalues $\lambda_1,\lambda_2$ of the Weingarten map. Since $\lambda_1$ and $\lambda_2$ can be computed from $H$ and $K$, we
may assume that $G$ depends only on $H$ and $K$. We set
$$G_H:=\frac{\partial G}{\partial H}\,,\quad G_K:=\frac{\partial G}{\partial K}\,.$$
From (\ref{eq gg2}) and (\ref{eq gg3}) we conclude
\begin{eqnarray}
\uu Gij=\frac{\partial G}{\partial \dd hij}&=&G_H\frac{\partial H}{\partial\dd hij}+G_K\frac{\partial K}{\partial\dd hij}\nonumber\\
&=&G_H\uu gij+G_K(H\uu gij-\uu hij)\nonumber\\
&=&(G_H+HG_K)\uu gij-G_K\uu hij\label{eq gg4}
\end{eqnarray}
and
\begin{eqnarray}
\frac{\partial G}{\partial\uu gij}&=&KG_K\dd gij+G_H\dd hij\,.\label{eq gg5}
\end{eqnarray}
From the general evolution equation of $G$ given by equation (\ref{eq hypG2}) we now derive
\begin{eqnarray}
\dt G-\uu \sigma ij\nabla_i\nabla_j G&=&\left(\uu Gij \sigma^{pq,kl}-\uu \sigma ij G^{pq,kl}\right)\nabla_i\dd hkl\nabla_j\dd hpq\nonumber\\
&&+\uu Gij\uu \sigma kl(\nabla_i\nabla_j\dd hkl-\nabla_k\nabla_l\dd hij)\nonumber\\
&&+f\left(\uu Gij(\hdd h2ij- \kappa\dd gij)-2\frac{\partial G}{\partial\uu gkl}\uu hkl\right)\,.\nonumber
\end{eqnarray}
We first simplify the last term
\begin{eqnarray}
&&\uu Gij(\hdd h2ij- \kappa\dd gij)-2\frac{\partial G}{\partial\uu gkl}\uu hkl
\nonumber\\
&=&\bigl((G_H+HG_K)\uu gij-G_K\uu hij\bigr)\bigl(H\dd hij-(\kappa+K)\dd gij\bigr)
-2(KG_K\dd gij+G_H\dd hij)\uu hij\nonumber\\
&=&(G_H+HG_K)(H^2-2(\kappa+K))-HG_K|h|^2+H(\kappa+K)G_K-2HKG_K-2|h|^2G_H\nonumber\\
&=&G_H(H^2-2(\kappa+K)-2|h|^2)+HG_K(H^2-2(\kappa+K)-|h|^2+\kappa+K-2K)\nonumber\\
&=&-(H^2+2(\kappa-K))G_H-H(\kappa+K)G_K
\end{eqnarray}

Let us also compute the second term:
\begin{eqnarray}
&&\uu Gij\uu \sigma kl(\nabla_i\nabla_j\dd hkl-\nabla_k\nabla_l\dd hij)\nonumber\\
&=&-\uu Gij\uu \sigma kl(\dddd Rmlik\ud hmj+\dddd Rmjik\ud hml)\nonumber\\
&=&(\kappa+K)\uu Gij\uu\sigma kl(\dd hij\dd gkl-\dd hkj\dd gli+\dd  hil\dd gjk-\dd hkl\dd gij)\nonumber\\
&=&(\kappa+K)(\uu Gij\dd hij\uu\sigma kl\dd gkl-\uu Gij\dd gij\uu\sigma kl\dd hkl)\,.\nonumber
\end{eqnarray}
On the other hand we compute
\begin{eqnarray}
\uu Gij\dd hij&=&\bigl[(G_H+HG_K)\uu gij-G_K\uu hij\bigr]\dd hij\nonumber\\
&=&HG_H+2KG_K\,,\nonumber
\end{eqnarray}
\begin{eqnarray}
\uu Gij\dd gij&=&\bigl[(G_H+HG_K)\uu gij-G_K\uu hij\bigr]\dd gij\nonumber\\
&=&2G_H+HG_K\,,\nonumber
\end{eqnarray}
\begin{eqnarray}
\uu\sigma kl\dd hkl&=&\frac{1}{\kappa H^2+(\kappa-K)^2}\left((\kappa-K+H^2)\uu gkl-H\uu hkl\right)\dd hkl\nonumber\\
&=&\frac{H(\kappa-K+H^2-|h|^2)}{\kappa H^2+(\kappa-K)^2}\nonumber\\
&=&\frac{H(\kappa+K)}{\kappa H^2+(\kappa-K)^2}\nonumber
\end{eqnarray}
and
\begin{eqnarray}
\uu\sigma kl\dd gkl&=&\frac{1}{\kappa H^2+(\kappa-K)^2}\left((\kappa-K+H^2)\uu gkl-H\uu hkl\right)\dd gkl\nonumber\\
&=&\frac{2(\kappa-K+H^2)-H^2}{\kappa H^2+(\kappa -K)^2}\nonumber\\
&=&\frac{2(\kappa-K)+H^2}{\kappa H^2+(\kappa -K)^2}\nonumber
\end{eqnarray}
so that
\begin{eqnarray}
&&\uu Gij\uu \sigma kl(\nabla_i\nabla_j\dd hkl-\nabla_k\nabla_l\dd hij)\nonumber\\
&=&(\kappa+K)(\uu Gij\dd hij\uu\sigma kl\dd gkl-\uu Gij\dd gij\uu\sigma kl\dd hkl)\nonumber\\
&=&\frac{\kappa+K}{\kappa H^2+(\kappa-K)^2}\Bigl((HG_H+2KG_K)(2(\kappa-K)+H^2)\nonumber\\
&&\hspace{4cm}-(2G_H+HG_K)H(\kappa+K)\Bigr)\nonumber\\
&=&\frac{\kappa+K}{\kappa H^2+(\kappa-K)^2}\Bigl(HG_H\bigl(2(\kappa-K)+H^2-2(\kappa+K)\bigr)\nonumber\\
&&+G_K\bigl(2K(2(\kappa-K)+H^2)-H^2(\kappa+K)\bigr)\Bigr)\nonumber\\
&=&\frac{(\kappa+K)(H^2-4K)}{\kappa H^2+(\kappa-K)^2}\Bigl(HG_H
-(\kappa-K)G_K\Bigr)\nonumber
\end{eqnarray}
So far, combining everything we have shown:
\begin{lemma}
Suppose $M$ is $2$-dimensional and $F:M\times[0,T)\to N$ evolves by (\ref{eq bas1}).
Then the evolution equation of a function $G$ that depends smoothly on the principal curvatures is given by
\begin{eqnarray}
\dt G-\uu \sigma ij\nabla_i\nabla_j G&=&\left(\uu Gij \sigma^{pq,kl}-\uu \sigma ij G^{pq,kl}\right)\nabla_i\dd hkl\nabla_j\dd hpq\label{eq gg7}\\
&&+\frac{(\kappa+K)(H^2-4K)}{\kappa H^2+(\kappa -K)^2}\Bigl(HG_H
-(\kappa-K)G_K\Bigr)\nonumber\\
&&-\phi\left((H^2+2(\kappa-K))G_H+H(\kappa+K)G_K\right)\,.\nonumber
\end{eqnarray}
\end{lemma}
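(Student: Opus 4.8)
The plan is to specialize the general evolution equation of Lemma \ref{lemm g}, equation (\ref{eq hypG2}), to the case $f=\phi$ and $n=2$, and to evaluate its three constituent terms one at a time. Since $\phi$ is the driving function, equation (\ref{eq fpart1}) gives $\uu fij=\uu\sigma ij$ and the definition behind (\ref{eq fpart4}) gives $f^{pq,kl}=\sigma^{pq,kl}$, so the left-hand side of (\ref{eq hypG2}) already reads $\dt G-\uu\sigma ij\nabla_i\nabla_j G$ and the quadratic gradient term $(\uu Gij\sigma^{pq,kl}-\uu\sigma ij G^{pq,kl})\nabla_i\dd hkl\nabla_j\dd hpq$ appears verbatim; no work is needed on it. The remaining two lines of (\ref{eq hypG2})\,---\,the curvature commutator $\uu Gij\uu\sigma kl(\nabla_i\nabla_j\dd hkl-\nabla_k\nabla_l\dd hij)$ and the reaction term $\phi(\uu Gij(\hdd h2ij-\kappa\dd gij)-2\frac{\partial G}{\partial\uu gkl}\uu hkl)$\,---\,are what must be reduced to the stated closed form in $H$, $K$, $G_H$, $G_K$.

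For the reaction term I would substitute the two-dimensional identities $\uu Gij=(G_H+HG_K)\uu gij-G_K\uu hij$ from (\ref{eq gg4}), $\frac{\partial G}{\partial\uu gij}=KG_K\dd gij+G_H\dd hij$ from (\ref{eq gg5}), and $\hdd h2ij=H\dd hij-K\dd gij$ from (\ref{eq gg1}), so that $\hdd h2ij-\kappa\dd gij=H\dd hij-(\kappa+K)\dd gij$. Expanding the contractions with the elementary traces $\uu gij\dd gij=2$, $\uu gij\dd hij=\uu hij\dd gij=H$ and $\uu hij\dd hij=|h|^2=H^2-2K$, and then collecting the coefficients of $G_H$ and $G_K$ (replacing $|h|^2$ by $H^2-2K$ at the end), should yield $-(H^2+2(\kappa-K))G_H-H(\kappa+K)G_K$, which is the third line of (\ref{eq gg7}).

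The curvature commutator is the main obstacle. First I would apply the interchange-of-derivatives identity (\ref{eq simons}) to rewrite $\nabla_i\nabla_j\dd hkl-\nabla_k\nabla_l\dd hij$ purely as Riemann curvature contracted with $h$. The decisive simplification is special to $n=2$: in the Gauss equation (\ref{eq gauss}) the quadratic part $-\dd hik\dd hjl+\dd hil\dd hjk$ equals $-K(\dd gik\dd gjl-\dd gil\dd gjk)$ for a two-dimensional second fundamental form, so the full curvature collapses to $\dddd Rijkl=-(\kappa+K)(\dd gik\dd gjl-\dd gil\dd gjk)$. After inserting this and fully contracting with $\uu Gij\uu\sigma kl$ and the two factors of $h$, I expect four terms; working in a basis that diagonalizes $\dd hij$ at the point (so that $\uu Gij$ and $\uu\sigma ij$ are simultaneously diagonal, which is exactly the mechanism recorded in (\ref{eq hddsymm})) the two cross terms cancel identically, leaving $(\kappa+K)\bigl((\uu Gij\dd hij)(\uu\sigma kl\dd gkl)-(\uu Gij\dd gij)(\uu\sigma kl\dd hkl)\bigr)$. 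It then remains to substitute the four traces $\uu Gij\dd hij=HG_H+2KG_K$, $\uu Gij\dd gij=2G_H+HG_K$, together with $\uu\sigma kl\dd hkl=\frac{H(\kappa+K)}{\kappa H^2+(\kappa-K)^2}$ and $\uu\sigma kl\dd gkl=\frac{2(\kappa-K)+H^2}{\kappa H^2+(\kappa-K)^2}$ read off from the explicit form (\ref{eq gg6}) of $\uu\sigma ij$, and to collect coefficients. This produces the second line of (\ref{eq gg7}), namely $\frac{(\kappa+K)(H^2-4K)}{\kappa H^2+(\kappa-K)^2}(HG_H-(\kappa-K)G_K)$. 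Adding the three contributions gives (\ref{eq gg7}); I expect the genuine difficulty to sit entirely in the curvature term\,---\,correctly invoking the Simons-type identity, recognizing the two-dimensional collapse of the Gauss tensor, and verifying the cross-term cancellation\,---\,while the ensuing algebraic bookkeeping of the $G_H$ and $G_K$ coefficients, though lengthy, is routine.
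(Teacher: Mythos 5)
Your proposal is correct and is essentially the paper's own proof: you specialize Lemma \ref{lemm g} to $f=\phi$ so that the gradient term appears verbatim, reduce the reaction term using (\ref{eq gg1}), (\ref{eq gg4}), (\ref{eq gg5}) and the elementary traces with $|h|^2=H^2-2K$, and treat the curvature commutator exactly as the paper does, via (\ref{eq simons}), the two-dimensional collapse of the Gauss equation to $\dddd Rijkl=-(\kappa+K)(\dd gik\dd gjl-\dd gil\dd gjk)$, the cancellation of the two cross terms (justified by simultaneous diagonalization, the mechanism behind (\ref{eq hddsymm})), and the four traces $\uu Gij\dd hij$, $\uu Gij\dd gij$, $\uu\sigma kl\dd hkl$, $\uu\sigma kl\dd gkl$ read off from (\ref{eq gg6}). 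Nothing further is needed.
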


The next lemma is interesting in its own right since it gives a precise relation between the full
norm of $\nabla h$ and $\nabla H$ on general $2$-dimensional Riemannian manifolds.
\begin{lemma}
Let $(M,g)$ be a $2$-dimensional Riemannian manifold and suppose $h\in\Gamma(T^*M\otimes T^*M)$ is a smooth symmetric Codazzi tensor,
i.e. in local coordinates we have $h=\dd hijdx^i\otimes dx^j$ and
\begin{eqnarray}
\dd hij&=&\dd hji\quad\forall\, i,j\,,\label{eq codsym}\\
\nabla_i\dd hjk&=&\nabla_j\dd hik\quad\forall\, i,j,k\,,\label{eq codcod}
\end{eqnarray}
where $\nabla$ denotes the Levi-Civita connection of $g$. Let $H=\uu gij\dd hij$ be the trace  and $\overset{\circ}{h}_{ij}:=\dd hij-H/2\,\dd gij$ be the tracefree part of $\dd hij$. Then the following identity holds:
\begin{eqnarray}
2|\overset{\circ}{h}|^2\bigl(|\nabla h|^2-|\nabla H|^2\bigr)=|\nabla|\overset{\circ}{h}|^2|^2-2\overset{\circ}{h}_{ij}\nabla^i|\overset{\circ}{h}|^2\nabla^jH\,.\label{eq kato}
\end{eqnarray}
\end{lemma}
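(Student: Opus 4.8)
The plan is to reduce the identity to its tracefree part and then to exploit the rigid pointwise algebra of tracefree symmetric $2$-tensors in dimension two. Throughout set $u:=|\overset{\circ}{h}|^2$, and locally orient $M$ so that the canonical complex structure $J$ (rotation by $\pi/2$) is available; $J$ satisfies $J^2=-\operatorname{id}$, is compatible with $\dd gij$, and is parallel. Writing $\dd hij=\overset{\circ}{h}_{ij}+\tfrac{H}{2}\dd gij$ and using $\nabla g=0$ together with $\uu gij\overset{\circ}{h}_{ij}=0$, one gets at once $|\nabla h|^2=|\nabla\overset{\circ}{h}|^2+\tfrac12|\nabla H|^2$, hence $|\nabla h|^2-|\nabla H|^2=|\nabla\overset{\circ}{h}|^2-\tfrac12|\nabla H|^2$. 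Tracing the Codazzi equation (\ref{eq codcod}) gives $\nabla_kH=\nabla^i\dd hik$, so the tracefree part obeys the divergence relation $\nabla^i\overset{\circ}{h}_{ij}=\tfrac12\nabla_jH$; in dimension two this is equivalent to the full Codazzi condition and is all the argument needs. With these reductions the assertion (\ref{eq kato}) becomes
\[
2u|\nabla\overset{\circ}{h}|^2-u|\nabla H|^2=|\nabla u|^2-2\overset{\circ}{h}_{ij}\nabla^iu\,\nabla^jH\,,
\]
and this is what I would prove.

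Next I would record the dimension-two algebra. Diagonalising $\overset{\circ}{h}$ at a point, where it equals $\operatorname{diag}(\mu,-\mu)$, yields the quadratic identity $\overset{\circ}{h}_i{}^k\overset{\circ}{h}_{kj}=\tfrac12 u\,\dd gij$. I then introduce the rotated tensor $\overset{\circ}{h}{}^{\ast}_{ij}:=\overset{\circ}{h}_{ik}\ud Jkj$, which is again symmetric and tracefree, is $g$-orthogonal to $\overset{\circ}{h}$, and has the same norm $|\overset{\circ}{h}{}^{\ast}|^2=u$; the same diagonalisation gives the companion identities $\overset{\circ}{h}{}^{\ast}_i{}^k\overset{\circ}{h}{}^{\ast}_{kj}=\tfrac12 u\,\dd gij$ and $\overset{\circ}{h}_i{}^k\overset{\circ}{h}{}^{\ast}_{kj}=\tfrac12 u\,\dd Jij$, the last one antisymmetric. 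Thus $\{\overset{\circ}{h},\overset{\circ}{h}{}^{\ast}\}$ is an orthogonal basis of the two-dimensional space of tracefree symmetric tensors on the open set $\{u>0\}$.

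Since for each fixed $k$ the tensor $\nabla_k\overset{\circ}{h}$ is tracefree and symmetric, I expand it in this basis,
\[
\nabla_k\overset{\circ}{h}_{ij}=\frac{\nabla_ku}{2u}\,\overset{\circ}{h}_{ij}+\frac{w_k}{u}\,\overset{\circ}{h}{}^{\ast}_{ij}\,,\qquad w_k:=\overset{\circ}{h}{}^{\ast\,ij}\nabla_k\overset{\circ}{h}_{ij}\,,
\]
using $\overset{\circ}{h}{}^{ij}\nabla_k\overset{\circ}{h}_{ij}=\tfrac12\nabla_ku$. Taking norms and applying the product identities gives $2u|\nabla\overset{\circ}{h}|^2=\tfrac12|\nabla u|^2+2|w|^2$. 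Contracting the expansion with $\uu gik$ and feeding in the divergence relation yields $u\,\nabla_jH=\xi_j+2\eta_j$, where $\xi_j:=\overset{\circ}{h}_j{}^i\nabla_iu$ and $\eta_j:=\overset{\circ}{h}{}^{\ast}_j{}^iw_i$. The product identities then evaluate $|\xi|^2=\tfrac u2|\nabla u|^2$, $|\eta|^2=\tfrac u2|w|^2$ and $\xi_j\eta^j=\tfrac u2\,J(\nabla u,w)$, where $J(\nabla u,w):=\uu Jij\nabla_iu\,w_j$. Substituting these into $u|\nabla H|^2=\tfrac1u|\xi+2\eta|^2$ and into $\overset{\circ}{h}_{ij}\nabla^iu\,\nabla^jH=\xi_j\nabla^jH$, both sides of the displayed identity collapse to the common value $-2\,J(\nabla u,w)$: on the left the $\tfrac12|\nabla u|^2$ and $2|w|^2$ contributions cancel between the two terms, and on the right the $|\nabla u|^2$ contributions cancel. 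At points where $\overset{\circ}{h}=0$ one has $\nabla u=0$, so both sides of (\ref{eq kato}) vanish there, and the identity extends by continuity to all of $M$.

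I expect the only real obstacle to be organisational: one must choose the decomposition above so that Codazzi enters solely through the clean relation $u\,\nabla H=\xi+2\eta$, and then keep track of the antisymmetric cross-term $J(\nabla u,w)$, which survives on both sides and must match. The genuinely two-dimensional inputs are the quadratic identity $\overset{\circ}{h}^2=\tfrac12 u\,g$ and the existence of the orthogonal partner $\overset{\circ}{h}{}^{\ast}$; once these are in hand, everything else is contraction. A more pedestrian alternative would avoid $J$ altogether: in a local orthonormal frame write $\overset{\circ}{h}_{11}=-\overset{\circ}{h}_{22}=a$, $\overset{\circ}{h}_{12}=b$, express both sides through $a,b$ and the components of $\nabla\overset{\circ}{h}$, use Codazzi to eliminate $\nabla H$, and verify that both sides equal the same multiple of $(a^2+b^2)$ times a Jacobian-type determinant in the derivatives.
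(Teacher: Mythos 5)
Your proof is correct, and it takes a genuinely different route from the paper's. The paper proves (\ref{eq kato}) by a frame computation: at an arbitrary point it chooses normal coordinates diagonalizing $h$, writes out $|\nabla\overset{\circ}{h}|^2$ and $|\nabla|\overset{\circ}{h}|^2|^2$ component by component, uses the Codazzi equation (\ref{eq codcod}) in the pointwise form $\nabla_i\overset{\circ}{h}_{jk}-\nabla_j\overset{\circ}{h}_{ik}=\frac{1}{2}\left(\nabla_jH\,g_{ik}-\nabla_iH\,g_{jk}\right)$ to trade the off-diagonal derivative squares for $\nabla H$-terms, and assembles the result using $|\nabla\overset{\circ}{h}|^2=|\nabla h|^2-\frac{1}{2}|\nabla H|^2$. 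You work invariantly instead: Codazzi enters exactly once, through the divergence identity $\nabla^i\overset{\circ}{h}_{ij}=\frac{1}{2}\nabla_jH$ (your side remark that this is equivalent to the full Codazzi condition in dimension two is correct, since a $3$-tensor antisymmetric in its first two indices has the form $\epsilon_{ij}V_k$ and is therefore determined by its trace; in any case only the forward implication is used), and the rest is pointwise algebra of tracefree symmetric tensors on a surface: the quadratic identity $\overset{\circ}{h}_i{}^k\overset{\circ}{h}_{kj}=\frac{u}{2}g_{ij}$, the rotated partner $\overset{\circ}{h}{}^{\ast}$, and the expansion of $\nabla_k\overset{\circ}{h}$ in the orthogonal basis $\{\overset{\circ}{h},\overset{\circ}{h}{}^{\ast}\}$ on the set $\{u>0\}$. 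I verified your contractions: $2u|\nabla\overset{\circ}{h}|^2=\frac{1}{2}|\nabla u|^2+2|w|^2$, $u|\nabla H|^2=\frac{1}{2}|\nabla u|^2+2J(\nabla u,w)+2|w|^2$ with $J(\nabla u,w)=J^{ik}\nabla_iu\,w_k$, and $\overset{\circ}{h}_{ij}\nabla^iu\,\nabla^jH=\frac{1}{2}|\nabla u|^2+J(\nabla u,w)$ are all right, so both sides of the reduced identity collapse to $-2J(\nabla u,w)$ as you claim; moreover at zeros of $u$ both sides vanish outright (smoothness and $u\ge 0$ force $\nabla u=0$ there), so the identity is actually verified pointwise everywhere and no continuity argument is even needed. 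What your route buys: it isolates where Codazzi is used, explains structurally why the two sides agree (both compute the same antisymmetric cross term), and avoids any choice of diagonalizing frame. What the paper's route buys: it is completely elementary, requiring neither the complex structure nor the decomposition of tracefree tensors; indeed the ``pedestrian alternative'' you sketch at the end is essentially the paper's proof.
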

\begin{proof}
Let $p\in M$ be arbitrary. At $p$ we choose normal coordinates such that $h$ is diagonal at $p$, say
$\dd hij=\operatorname{diag}(\lambda_1,\lambda_2)$. This is possible since $\dd hij$ is symmetric. 
Then we compute
\begin{eqnarray}
|\nabla\overset{\circ}{h}|^2
&=&\sum_{i,j,k=1}^2(\nabla_k\overset{\circ}{h}_{ij})^2\nonumber\\
&=&(\nabla_1\overset{\circ}{h}_{11})^2+2(\nabla_1\overset{\circ}{h}_{12})^2+(\nabla_2\overset{\circ}{h}_{11})^2\nonumber\\
&&+(\nabla_2\overset{\circ}{h}_{22})^2+2(\nabla_2\overset{\circ}{h}_{12})^2+(\nabla_1\overset{\circ}{h}_{22})^2\,.\nonumber
\end{eqnarray}
Since $\overset{\circ}{h}_{11}=-\overset{\circ}{h}_{22}$ we obtain $\nabla_i\overset{\circ}{h}_{11}=-\nabla_i\overset{\circ}{h}_{22}$ and then
\begin{eqnarray}
|\nabla\overset{\circ}{h}|^2
&=&4(\nabla_1\overset{\circ}{h}_{11})^2+4(\nabla_2\overset{\circ}{h}_{22})^2\nonumber\\
&&+2\left((\nabla_1\overset{\circ}{h}_{12})^2-(\nabla_2\overset{\circ}{h}_{11})^2+(\nabla_2\overset{\circ}{h}_{12})^2-(\nabla_1\overset{\circ}{h}_{22})^2\right)\,.\label{umb 2}
\end{eqnarray}
Next we compute
\begin{eqnarray}
|\nabla|\overset{\circ}{h}|^2|^2
&=&4\sum_{k=1}^2\left(\sum_{i,j=1}^2\overset{\circ}{h}_{ij}
\nabla_k\overset{\circ}{h}_{ij}\right)^2\nonumber\\
&=&4\left(
\overset{\circ}{h}_{11}\nabla_1\overset{\circ}{h}_{11}
+\overset{\circ}{h}_{22}\nabla_1\overset{\circ}{h}_{22}
\right)^2
+4\left(
\overset{\circ}{h}_{11}\nabla_2\overset{\circ}{h}_{11}
+\overset{\circ}{h}_{22}\nabla_2\overset{\circ}{h}_{22}
\right)^2\nonumber\\
&=&4\left(2\overset{\circ}{h}_{11}\nabla_1\overset{\circ}{h}_{11})\right)^2
+4\left(-2\overset{\circ}{h}_{11}\nabla_2\overset{\circ}{h}_{22})\right)^2\nonumber\\
&=&16(\overset{\circ}{h}_{11})^2\left((\nabla_1\overset{\circ}{h}_{11})^2
+(\nabla_2\overset{\circ}{h}_{22})^2\right)\,.\label{umb 3}
\end{eqnarray}
Combining (\ref{umb 2}), (\ref{umb 3}) and 
$|\overset{\circ}{h}|^2=(\overset{\circ}{h}_{11})^2+(\overset{\circ}{h}_{22})^2=2(\overset{\circ}{h}_{11})^2$
we get
\begin{equation}\label{umb 4}
2|\overset{\circ}{h}|^2\cdot|\nabla\overset{\circ}{h}|^2-|\nabla|\overset{\circ}{h}|^2|^2
=4|\overset{\circ}{h}|^2
\left((\nabla_1\overset{\circ}{h}_{12})^2-(\nabla_2\overset{\circ}{h}_{11})^2
+(\nabla_2\overset{\circ}{h}_{12})^2-(\nabla_1\overset{\circ}{h}_{22})^2\right).
\end{equation}
From $\nabla_i\dd hjk=\nabla_j\dd hik$ we obtain
$$\nabla_i\overset{\circ}{h}_{jk}-\nabla_j\overset{\circ}{h}_{ik}
=\frac{1}{2}\left(\nabla_jH\dd gik-\nabla_iH\dd gjk\right)$$
so that
$$(\nabla_1\overset{\circ}{h}_{12})^2-(\nabla_2\overset{\circ}{h}_{11})^2
=\frac{1}{2}(\nabla_1\overset{\circ}{h}_{12}+\nabla_2\overset{\circ}{h}_{11})\nabla_2H
=\nabla_2\overset{\circ}{h}_{11}\nabla_2H+\frac{1}{4}(\nabla_2H)^2$$
and
$$(\nabla_2\overset{\circ}{h}_{12})^2-(\nabla_1\overset{\circ}{h}_{22})^2
=\nabla_1\overset{\circ}{h}_{22}\nabla_1H+\frac{1}{4}(\nabla_1H)^2\,.$$
Then (\ref{umb 4}) implies
\begin{equation}\label{eq inter1}
2|\overset{\circ}{h}|^2\cdot|\nabla\overset{\circ}{h}|^2-|\nabla|\overset{\circ}{h}|^2|^2
=|\overset{\circ}{h}|^2
\left(|\nabla H|^2+4\nabla_1\overset{\circ}{h}_{22}\nabla_1H
-4\nabla_2\overset{\circ}{h}_{22}\nabla_2H\right)\,.
\end{equation}
In a next step we compute 
\begin{eqnarray}
\nabla_i|\overset{\circ}{h}|^2&=&2\overset{\circ}{h}_{11}\nabla_i\overset{\circ}{h}_{11}+2\overset{\circ}{h}_{22}\nabla_i\overset{\circ}{h}_{22}\nonumber\\
&=&-4\overset{\circ}{h}_{11}\nabla_i\overset{\circ}{h}_{22}\label{eq symsy}
\end{eqnarray}
where we have used that $|\overset{\circ}{h}|^2$ is a smooth function and $\overset{\circ}{h}_{ij}$ is diagonal and tracefree.
Then we get
\begin{eqnarray}
\overset{\circ}{h}_{ij}\nabla^i|\overset{\circ}{h}|^2\nabla^jH
&=&\overset{\circ}{h}_{11}\nabla_1|\overset{\circ}{h}|^2\nabla_1H+\overset{\circ}{h}_{22}\nabla_2|\overset{\circ}{h}|^2\nabla_2H\nonumber\\
&\overset{(\ref{eq symsy})}{=}&-4(\overset{\circ}{h}_{11})^2\nabla_1\overset{\circ}{h}_{22}\nabla_1H
-4\overset{\circ}{h}_{22}\overset{\circ}{h}_{11}\nabla_2\overset{\circ}{h}_{22}\nabla_2H\nonumber\\
&=&-(\overset{\circ}{h}_{11})^2(4\nabla_1\overset{\circ}{h}_{22}\nabla_1H
-4\nabla_2\overset{\circ}{h}_{22}\nabla_2H)\nonumber\\
&=&-\frac{1}{2}|\overset{\circ}{h}|^2(4\nabla_1\overset{\circ}{h}_{22}\nabla_1H
-4\nabla_2\overset{\circ}{h}_{22}\nabla_2H)\,.\nonumber
\end{eqnarray}
Combining with (\ref{eq inter1}) we get
\begin{eqnarray}
2|\overset{\circ}{h}|^2\cdot|\nabla\overset{\circ}{h}|^2-|\nabla|\overset{\circ}{h}|^2|^2
&=&|\overset{\circ}{h}|^2|\nabla H|^2-2\overset{\circ}{h}_{ij}\nabla^i|\overset{\circ}{h}|^2\nabla^jH\nonumber
\end{eqnarray}
and equation (\ref{eq kato}) follows from $|\nabla\overset{\circ}{h}|^2=|\nabla h|^2-\frac{1}{2}|\nabla H|^2$ .
\end{proof}

\begin{corollary}
\begin{enumerate}[i)]
\item From $\uu gkl\dd{\overset{\circ}{h}}ik\dd{\overset{\circ}{h}}jl=|\overset{\circ}{h}|^2\dd gij$ it follows
$$2|\overset{\circ}{h}|^2|\nabla\overset{\circ}{h}|^2=|\nabla_i|\overset{\circ}{h}|^2-\dd{\overset{\circ}{h}}ij\nabla^jH|^2$$
\item
In case $\nabla H=0$ we obtain the optimal Kato identity
$$(2|h|^2-H^2)|\nabla h|^2=|\nabla |h|^2|^2\,.$$
\item
Applying (\ref{eq kato}) to the mean curvature $H$ and the Gau\ss\ curvature $K=\det\ud hij$ we get
\begin{eqnarray}
&&(H^2-4K)\bigl(|\nabla h|^2-|\nabla H|^2\bigr)\nonumber\\
&&\hspace{2cm}=2(H\uu gij-\uu hij)\bigl(H\nabla_iH\nabla_jH-2\nabla_iH\nabla_jK\bigr)\nonumber\\
&&\hspace{2.4cm}- 2H\langle\nabla H,\nabla K\rangle+4|\nabla K|^2\,.\label{eq kato2}
\end{eqnarray}
\end{enumerate}
\end{corollary}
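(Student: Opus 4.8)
The plan is to derive all three identities as purely algebraic consequences of the Kato identity (\ref{eq kato}), exploiting the special two-dimensional relations between $H$, $K$, $|h|^2$ and the tracefree part $\overset{\circ}{h}$. None of the three parts requires a new idea beyond (\ref{eq kato}); the work is reorganizing its two sides.

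For part i) I would complete the square on the right-hand side of (\ref{eq kato}). Expanding
$$\left|\nabla_i|\overset{\circ}{h}|^2-\dd{\overset{\circ}{h}}ij\nabla^jH\right|^2=\left|\nabla|\overset{\circ}{h}|^2\right|^2-2\dd{\overset{\circ}{h}}ij\nabla^i|\overset{\circ}{h}|^2\nabla^jH+\uu gkl\dd{\overset{\circ}{h}}ik\dd{\overset{\circ}{h}}jl\nabla^iH\nabla^jH,$$
the first two terms are exactly the right-hand side of (\ref{eq kato}), while the quadratic term is rewritten by means of the identity $\uu gkl\dd{\overset{\circ}{h}}ik\dd{\overset{\circ}{h}}jl=|\overset{\circ}{h}|^2\dd gij$ given at the start of i), which reflects that in dimension two the square of a tracefree symmetric $2$-tensor is a multiple of the metric (seen at once by diagonalizing $\overset{\circ}{h}$). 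After this substitution the quadratic term becomes $|\overset{\circ}{h}|^2|\nabla H|^2$; rewriting the left-hand side of (\ref{eq kato}) through the relation $|\nabla\overset{\circ}{h}|^2=|\nabla h|^2-\tfrac12|\nabla H|^2$ already used in the Lemma produces a compensating $-|\overset{\circ}{h}|^2|\nabla H|^2$, so that the two cancel and (\ref{eq kato}) turns into the asserted identity $2|\overset{\circ}{h}|^2|\nabla\overset{\circ}{h}|^2=|\nabla_i|\overset{\circ}{h}|^2-\dd{\overset{\circ}{h}}ij\nabla^jH|^2$.

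For part ii) I would specialize i) to $\nabla H=0$. Then $\dd{\overset{\circ}{h}}ij\nabla^jH=0$, so the right-hand side of i) collapses to $|\nabla|\overset{\circ}{h}|^2|^2$. On the other hand $\nabla H=0$ gives $\nabla\overset{\circ}{h}=\nabla h$ and, since $|h|^2=|\overset{\circ}{h}|^2+\tfrac12H^2$, also $\nabla|\overset{\circ}{h}|^2=\nabla|h|^2$. Finally the two-dimensional relations $|h|^2=H^2-2K$ and $|\overset{\circ}{h}|^2=\tfrac12(H^2-4K)$ yield $2|\overset{\circ}{h}|^2=2|h|^2-H^2$. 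Substituting all of this into i) produces exactly the optimal Kato identity $(2|h|^2-H^2)|\nabla h|^2=|\nabla|h|^2|^2$.

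For part iii) I would feed the two-dimensional dictionary directly into (\ref{eq kato}). The inputs are $2|\overset{\circ}{h}|^2=H^2-4K$ (which supplies the prefactor $H^2-4K$ on the left), the gradient formula $\nabla_i|\overset{\circ}{h}|^2=H\nabla_iH-2\nabla_iK$ obtained by differentiating $|\overset{\circ}{h}|^2=\tfrac12(H^2-4K)$, and $\dd{\overset{\circ}{h}}ij=\dd hij-\tfrac H2\dd gij$. Inserting the first into the left-hand side of (\ref{eq kato}) and the latter two into the terms $|\nabla|\overset{\circ}{h}|^2|^2$ and $\dd{\overset{\circ}{h}}ij\nabla^i|\overset{\circ}{h}|^2\nabla^jH$, the $\tfrac H2\dd gij$ part of $\overset{\circ}{h}$ contributes the scalar terms $-2H\langle\nabla H,\nabla K\rangle+4|\nabla K|^2$ together with several cancellations, while the $\dd hij$ part assembles into $2(H\uu gij-\uu hij)(H\nabla_iH\nabla_jH-2\nabla_iH\nabla_jK)$, giving (\ref{eq kato2}). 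The only genuine effort lies here: I expect the main obstacle to be the bookkeeping of the many quadratic terms in $\nabla H$ and $\nabla K$ so that they regroup into this compact form with coefficient tensor $H\uu gij-\uu hij=\partial K/\partial\dd hij$ from (\ref{eq gg2}); everything else is direct substitution into the already-established identity (\ref{eq kato}).
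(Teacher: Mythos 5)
The paper gives no proof of this corollary at all --- it is stated as an immediate consequence of the Kato identity (\ref{eq kato}) --- so the only meaningful comparison is correctness, and your overall strategy (direct substitution of the two-dimensional relations into (\ref{eq kato})) is clearly the intended one. Your parts ii) and iii) are correct. For iii), the substitutions $2|\overset{\circ}{h}|^2=H^2-4K$, $\nabla_i|\overset{\circ}{h}|^2=H\nabla_iH-2\nabla_iK$ and $\dd{\overset{\circ}{h}}ij=\dd hij-\frac{H}{2}\dd gij$ do make (\ref{eq kato}) regroup into (\ref{eq kato2}): the bookkeeping you deferred closes, both sides expanding to $2H^2|\nabla H|^2-6H\langle\nabla H,\nabla K\rangle+4|\nabla K|^2-2H\uu hij\nabla_iH\nabla_jH+4\uu hij\nabla_iH\nabla_jK$. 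Part ii) is likewise fine, whether one routes it through i) or directly through (\ref{eq kato}), since the problematic term identified below carries a factor $|\nabla H|^2$ and vanishes when $\nabla H=0$.

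The genuine flaw is in part i), specifically in your parenthetical claim that the premise ``is seen at once by diagonalizing $\overset{\circ}{h}$''. It is not: diagonalizing a tracefree symmetric $2$-tensor gives $\overset{\circ}{h}=\operatorname{diag}(a,-a)$, hence $\uu gkl\dd{\overset{\circ}{h}}ik\dd{\overset{\circ}{h}}jl=a^2\dd gij$ while $|\overset{\circ}{h}|^2=2a^2$, so the square of $\overset{\circ}{h}$ equals $\frac12|\overset{\circ}{h}|^2\dd gij$ --- half of what the corollary (and you) assert. Your completion of the square is a valid proof of the implication ``premise $\Rightarrow$ conclusion'', but with the true factor the quadratic term is $\frac12|\overset{\circ}{h}|^2|\nabla H|^2$, the clean cancellation against the $-|\overset{\circ}{h}|^2|\nabla H|^2$ coming from the left-hand side does not occur, and what actually follows from (\ref{eq kato}) is
\begin{equation*}
2|\overset{\circ}{h}|^2|\nabla\overset{\circ}{h}|^2=\Bigl|\nabla_i|\overset{\circ}{h}|^2-\dd{\overset{\circ}{h}}ij\nabla^jH\Bigr|^2+\frac12|\overset{\circ}{h}|^2|\nabla H|^2\,,
\end{equation*}
so the identity printed in i) fails whenever $\nabla H\neq 0$ (one can check this on the flat Codazzi tensor $\dd hij=\partial_i\partial_j\bigl(x^2y/2\bigr)$ on $\real{2}$). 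This normalization error is admittedly already present in the paper's statement, but a blind proof has to detect the inconsistency rather than certify the false premise by an appeal to a diagonalization that in fact refutes it. Neither ii) nor iii) is damaged: the discrepancy is proportional to $|\nabla H|^2$, and iii) is derived from (\ref{eq kato}) without ever invoking the premise of i).
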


\subsection{$\bf C^2$-estimates}
Let us first compute the evolution equation of the Gau\ss\ curvature $K$. We use equation
(\ref{eq gg7}) with $G=K$. In this case we obtain
\begin{eqnarray}
G_K=1, \quad G_H=0, \quad \uu Gij=H\uu gij-\uu hij\,,\quad G^{pq,kl}=\uu gkl\uu gpq-\uu gpk\uu gql\,.\label{eq dergauss}
\end{eqnarray}
Then a straightforward computation using Codazzi's equation and equations (\ref{eq fpart4}), (\ref{eq kato2}) shows
\begin{eqnarray}
&&\left(\uu Gij \sigma^{pq,kl}-\uu \sigma ij G^{pq,kl}\right)\nabla_i\dd hkl\nabla_j\dd hpq\nonumber\\
&=&
\frac{1}{\bigl(\kappa H^2+(\kappa-K)^2\bigr)^2}\Bigl\{(\kappa^2-K^2)(H^2+2(\kappa-K))(|\nabla h|^2-|\nabla H|^2)\nonumber\\
&&\phantom{\frac{1}{\bigl(\kappa H^2+(\kappa-K)^2\bigr)^2}\Bigl\{}-(\kappa^2-K^2)H(H\uu gij-\uu hij)\nabla_iH\nabla_jH\nonumber\\
&&\phantom{\frac{1}{\bigl(\kappa H^2+(\kappa-K)^2\bigr)^2}\Bigl\{}-2\kappa H^2
(H\uu gij-\uu hij)\nabla_iH\nabla_jK\nonumber\\
&&\phantom{\frac{1}{\bigl(\kappa H^2+(\kappa-K)^2\bigr)^2}\Bigl\{}+2H(\kappa-K)
(H\uu gij-\uu hij)\nabla_iK\nabla_jK\nonumber\\
&&\phantom{\frac{1}{\bigl(\kappa H^2+(\kappa-K)^2\bigr)^2}\Bigl\{}
-(\kappa-K)^2\bigl(H\langle\nabla H,\nabla K\rangle-2|\nabla K|^2\bigr)\Bigr\}\,.\nonumber
\end{eqnarray}
Inserting this into equation (\ref{eq gg7}) we have shown
\begin{lemma}
In dimension $2$ there exists a smooth function $S$ and a smooth vector field $V$ such that the
evolution equation of the Gau\ss\ curvature $K$ induced by the flow (\ref{eq bas1}) can be
written in the form
\begin{eqnarray}
\dt K=\uu\sigma ij\nabla_i\nabla_j K+\langle\nabla K,V\rangle+(\kappa^2-K^2)S-(\kappa+K)H\phi\,.\label{evol kshort}
\end{eqnarray}
\end{lemma}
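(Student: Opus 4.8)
The plan is to read off everything from the general two-dimensional evolution equation (\ref{eq gg7}) by specializing $G=K$. First I would record, as in (\ref{eq dergauss}), that $G_K=1$, $G_H=0$, $\uu Gij=H\uu gij-\uu hij$ and $G^{pq,kl}=\uu gkl\uu gpq-\uu gpk\uu gql$. Substituting $G_H=0$, $G_K=1$ into the $\phi$-linear term of (\ref{eq gg7}) instantly produces $-\phi H(\kappa+K)$, which is exactly the last term $-(\kappa+K)H\phi$ demanded by the lemma. Likewise the zeroth-order reaction term becomes $-\frac{(\kappa+K)(H^2-4K)(\kappa-K)}{\kappa H^2+(\kappa-K)^2}=-(\kappa^2-K^2)\frac{H^2-4K}{\kappa H^2+(\kappa-K)^2}$, a smooth multiple of $\kappa^2-K^2$ which I would absorb into the term $(\kappa^2-K^2)S$. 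Thus the whole content of the lemma reduces to analyzing the single quadratic gradient term $\left(\uu Gij\sigma^{pq,kl}-\uu\sigma ij G^{pq,kl}\right)\nabla_i\dd hkl\nabla_j\dd hpq$.

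For that term I would insert the explicit expression (\ref{eq fpart4}) for $\sigma^{pq,kl}$, the value of $G^{pq,kl}$ above, and then use the Codazzi equation (\ref{eq codazzi}) to freely symmetrize the covariant derivatives $\nabla_i\dd hkl$. The resulting contraction is a polynomial in $\nabla h$ with coefficients that are rational functions of $H$ and $K$ over the denominator $\kappa H^2+(\kappa-K)^2$. The crucial step is to feed the Kato-type identity (\ref{eq kato2}) into this expression: it rewrites the scalar $(H^2-4K)(|\nabla h|^2-|\nabla H|^2)$ --- the only piece genuinely involving the full norm of $\nabla h$ --- purely in terms of $\nabla H$ and $\nabla K$. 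After this substitution every surviving summand visibly carries either an overall factor $\kappa^2-K^2$ or an explicit factor $\nabla K$, which is precisely the structure of the displayed computation preceding the lemma.

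I would then simply collect terms. Everything proportional to $\kappa^2-K^2$, including the zeroth-order contribution above, goes into $(\kappa^2-K^2)S$; here $S$ is a sum of the remaining coefficients times $|\nabla h|^2-|\nabla H|^2$ and $(H\uu gij-\uu hij)\nabla_iH\nabla_jH$, divided by $(\kappa H^2+(\kappa-K)^2)^2$. Every remaining summand contains at least one factor $\nabla K$, so by factoring out a single $\nabla^jK$ I can write it as $\langle\nabla K,V\rangle$ for a vector field $V$ built from $\nabla H$, $\nabla K$ and the tensors $\uu gij,\uu hij$. Since the immersion is smooth, $h$, $H$, $K$ and their covariant derivatives are smooth, and the denominator $\kappa H^2+(\kappa-K)^2$ is everywhere strictly positive; hence both $S$ and $V$ are smooth, which is all that the statement requires.

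The main obstacle is the middle paragraph: the bookkeeping of the quadratic term is lengthy, and a priori $|\nabla h|^2$ cannot be expressed through $\nabla H$ and $\nabla K$ alone, so it is not obvious that the full-gradient contribution organizes itself into the two allowed shapes. The identity (\ref{eq kato2}) is exactly what makes this possible, by trading the only troublesome scalar $(H^2-4K)\bigl(|\nabla h|^2-|\nabla H|^2\bigr)$ for expressions in $\nabla H$ and $\nabla K$; without it the term $|\nabla h|^2-|\nabla H|^2$ would obstruct the separation into an $S$-part carrying $\kappa^2-K^2$ and a $V$-part carrying $\nabla K$. Once (\ref{eq kato2}) is applied, the remaining work is purely algebraic grouping.
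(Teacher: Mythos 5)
Your proposal follows the paper's own proof essentially verbatim: specialize the general two-dimensional equation (\ref{eq gg7}) to $G=K$ using the derivatives (\ref{eq dergauss}), read off the $-(\kappa+K)H\phi$ term and the reaction term $-(\kappa^2-K^2)\frac{H^2-4K}{\kappa H^2+(\kappa-K)^2}$ directly, and treat the quadratic gradient term by a computation based on Codazzi, (\ref{eq fpart4}) and the Kato-type identity (\ref{eq kato2}), after which every summand carries either a factor $\kappa^2-K^2$ (absorbed into $S$, including the surviving $|\nabla h|^2-|\nabla H|^2$ piece) or a factor $\nabla K$ (absorbed into $\langle\nabla K,V\rangle$). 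This is exactly the paper's argument, down to the role you assign to (\ref{eq kato2}) and the resulting form of $S$ and $V$.
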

\begin{lemma}\label{lemm kest}
Suppose $n=2$ and that $K^2<\kappa^2$ at $t=0$. Then there exists a constant $\varepsilon>0$
such that  the estimates
\begin{eqnarray}
\kappa-K&>&\varepsilon\,,\label{est 1}\\
\kappa+K&>& 0\label{est 2}
\end{eqnarray}
hold for all $t\in[0,T)$.
\end{lemma}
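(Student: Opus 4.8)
The two inequalities call for different mechanisms, which is exactly what the asymmetry of the statement (a time-uniform gap below $\kappa$, but only positivity above $-\kappa$) reflects. I would prove the lower bound $\kappa+K>0$ by a maximum principle applied to $\kappa+K$, and the uniform upper bound $\kappa-K>\varepsilon$ from the a~priori estimate for the Lagrangian angle in Lemma~\ref{est phi}, \emph{not} from the evolution equation of $K$.

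\textbf{The bound $\kappa+K>0$.} Set $v:=\kappa+K$. Since $\kappa^2-K^2=(\kappa-K)v$ and $(\kappa+K)H\phi=vH\phi$, equation (\ref{evol kshort}) takes the linear form
\[
\dt v=\sigma^{ij}\nabla_i\nabla_j v+\langle\nabla v,V\rangle+c\,v,\qquad c:=(\kappa-K)S-H\phi,
\]
where $c$ is smooth on $M\times[0,T)$, hence bounded on every slab $M\times[0,t_0]$ with $t_0<T$ by compactness of $M$. At a spatial minimum of $v$ one has $\nabla v=0$ and $\sigma^{ij}\nabla_i\nabla_j v\ge0$, so with $m(t):=\min_M v(\cdot,t)$ the standard argument gives $\dt m\ge-\big(\sup_{M\times[0,t_0]}|c|\big)\,m$ while $m>0$. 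As $m(0)=\min_M(\kappa+K)(\cdot,0)>0$ by hypothesis, Gronwall yields $m(t)>0$ on each $[0,t_0]$, hence $\kappa+K>0$ throughout $[0,T)$. The resulting lower bound may degenerate as $t_0\to T$, which is precisely why no uniform gap is claimed on this side.

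\textbf{The uniform bound $\kappa-K>\varepsilon$.} I would not use (\ref{evol kshort}) here: the reaction term $(\kappa^2-K^2)S$ is quadratic in $\nabla h$, sign-indefinite and not a~priori bounded, so a maximum principle for $K$ yields only the strict preservation $K<\kappa$ and no time-uniform barrier. Instead, put $\Phi:=\sqrt{\kappa}\,\phi=\arctan(\lambda_1/\sqrt{\kappa})+\arctan(\lambda_2/\sqrt{\kappa})$. The hypothesis $|K|<\kappa$ at $t=0$ forces $|\Phi(\cdot,0)|<\pi/2$ pointwise: if $\lambda_1\lambda_2>0$ (say both factors positive) then $\lambda_1\lambda_2<\kappa$ means $ts<1$ for $t=\lambda_1/\sqrt\kappa$, $s=\lambda_2/\sqrt\kappa$, so $\arctan t+\arctan s<\pi/2$; if $\lambda_1\lambda_2\le0$ the two arctangents have opposite signs. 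By compactness and the continuity of $\Phi$ (Lemma~\ref{lemm anglesmooth}) we obtain $\Phi_0:=\max_M|\Phi(\cdot,0)|<\pi/2$, and Lemma~\ref{est phi} propagates this to $|\Phi(p,t)|\le\Phi_0$ for all $(p,t)$.

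\textbf{From $|\Phi|\le\Phi_0$ to the gap.} Finally I would convert the angle bound into an upper bound for $K$. When $\lambda_1\lambda_2\le0$ we have $K\le0$. When $\lambda_1,\lambda_2$ share a sign, maximize $P:=K/\kappa=ts$ subject to $\arctan t+\arctan s\le\Phi_0$: on the active constraint $\tfrac{t+s}{1-ts}=\tan\Phi_0$, and $(t+s)^2\ge4ts$ shows the product is largest at $t=s=\tan(\Phi_0/2)$, whence $P\le\tan^2(\Phi_0/2)$. In all cases
\[
K\le\kappa\,\tan^2\!\big(\tfrac{\Phi_0}{2}\big)<\kappa,
\]
so $\kappa-K\ge\varepsilon:=\kappa\big(1-\tan^2(\Phi_0/2)\big)>0$ on $[0,T)$. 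The crux, and the step I expect to be most delicate, is recognizing that the time-uniform upper gap must be read off from the Lagrangian angle estimate rather than from the parabolic equation for $K$, together with the elementary but sharp arctangent computation turning $|\Phi|\le\Phi_0<\pi/2$ into the quantitative separation $K\le\kappa\tan^2(\Phi_0/2)$.
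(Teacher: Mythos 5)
Your proof is correct and takes essentially the same route as the paper: the uniform gap $\kappa-K>\varepsilon$ is extracted from the Lagrangian angle estimate of Lemma \ref{est phi} (using that $|K|<\kappa$ initially forces $\sqrt{\kappa}\,|\phi|<\pi/2$ at $t=0$), and $\kappa+K>0$ follows from the evolution equation (\ref{evol kshort}) together with the maximum principle, since the reaction terms carry a factor $\kappa+K$ --- exactly the paper's two mechanisms. The only difference is cosmetic, namely the elementary step converting the angle bound into a quantitative gap: the paper writes $\phi=\tfrac{1}{\sqrt{\kappa}}\arctan\tfrac{\sqrt{\kappa}H}{\kappa-K}$ and uses the identity $\kappa H^2+(\kappa-K)^2=(\kappa+\lambda_1^2)(\kappa+\lambda_2^2)\ge\kappa^2$ to get $\varepsilon=\kappa\cos C$ via a continuation argument, whereas you optimize the product $\lambda_1\lambda_2$ under the constraint $|\Phi|\le\Phi_0$ to get the pointwise bound $K\le\kappa\tan^2\bigl(\Phi_0/2\bigr)$, which is equally valid.
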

\begin{proof}
For real numbers $x,y$ with $xy<1$ one has
$$\arctan x+\arctan y=\arctan\frac{x+y}{1-xy}\,.$$
If $\lambda_1,\lambda_2$ denote the two principal curvatures, then $\kappa-K>0$ is equivalent to
$$\frac{\lambda_1}{\sqrt{\kappa}}\cdot\frac{\lambda_2}{\sqrt{\kappa}}<1$$ and hence we
have
$$\phi=\frac{1}{\sqrt{\kappa}}\left(\arctan\frac{\lambda_1}{\sqrt{\kappa}}+\arctan\frac{\lambda_2}{\sqrt{\kappa}}\right)=\frac{1}{\sqrt{\kappa}}\arctan\frac{\sqrt{\kappa}\, H}{\kappa-K}\,.$$
Lemma \ref{est phi} implies that there exists a positive constant $C<\pi/2$, depending only on 
$\sup_{p\in M}|\phi(p,0)|$, such that
$$\kappa H^2\le(\kappa-K)^2 \tan^2(Ce^{-2t})$$
as long as $\kappa-K\ge 0$. Adding $(\kappa-K)^2$ to both sides we get
$$\kappa^2\le(\kappa+\lambda_1^2)(\kappa+\lambda_2^2)=\kappa H^2+(\kappa-K)^2\le(\kappa-K)^2\bigl(1+\tan^2(Ce^{-2t})\bigr)$$
and therefore
$$(\kappa-K)^2\ge\kappa^2\cos^2(Ce^{-2t})>0$$
as long as $\kappa-K\ge0$. This shows that $\kappa-K$ cannot tend to zero and that inequality
(\ref{est 1}) holds with $\varepsilon:=\kappa\cos(C)$.
The second inequality (\ref{est 2}) follows from
the evolution equation of $K$ given by equation (\ref{evol kshort}) and the maximum principle.
\end{proof}
So far we have shown that the Gau\ss\ curvature $K$ stays uniformly bounded. To prevent the
formation of thin "necks" we need to control the full norm $|h|$ of the second fundamental form.
But this can be achieved by exploiting  the bounds of $K$ and $\tan(\sqrt{\kappa}\phi)$ since
$$\kappa|h|^2=\kappa H^2-2\kappa K=(\kappa-K)^2\tan^2(\sqrt{\kappa}\phi)-2\kappa K$$
and the RHS is bounded.

We summarize:
\begin{lemma}
Suppose $n=2$ and that the Gau\ss\ curvature satisfies $|K|<\kappa$ at $t=0$. Then the second fundamental form $h$ stays uniformly bounded for all $t\in[0,T)$.
\end{lemma}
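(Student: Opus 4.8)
The plan is to deduce the bound on $|h|$ directly from the algebraic identity
$$\kappa|h|^2=(\kappa-K)^2\tan^2(\sqrt{\kappa}\,\phi)-2\kappa K$$
recorded just above the statement, by showing that each quantity on the right-hand side is already under control: the Gau\ss\ curvature $K$ through Lemma \ref{lemm kest}, and the Lagrangian angle $\phi$ through Lemma \ref{est phi}. Thus no new evolution equation is needed; the lemma is essentially a harvest of the estimates established so far.

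First I would record the consequences of Lemma \ref{lemm kest}: since $|K|<\kappa$ is assumed at $t=0$, that lemma produces $\varepsilon>0$ with $\kappa-K>\varepsilon$ and $\kappa+K>0$ for all $t\in[0,T)$, so that $K$ is confined to the interval $(-\kappa,\kappa-\varepsilon)$. In particular $|K|<\kappa$ and $0<\kappa-K<2\kappa$ hold uniformly in $t$, which bounds the outer factors $(\kappa-K)^2$ and $2\kappa K$ appearing in the identity.

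Next I would bound the middle factor $\tan^2(\sqrt{\kappa}\,\phi)$. Because $|K|<\kappa$ at $t=0$ means $\lambda_1\lambda_2/\kappa<1$, the arctangent addition formula used in the proof of Lemma \ref{lemm kest} shows that $\sqrt{\kappa}\,\phi(\cdot,0)=\arctan\frac{\sqrt{\kappa}\,H}{\kappa-K}$ takes values in $(-\pi/2,\pi/2)$; by compactness of $M$ the constant $C:=\sqrt{\kappa}\sup_{M}|\phi(\cdot,0)|$ therefore satisfies $C<\pi/2$. Lemma \ref{est phi} then gives $\sqrt{\kappa}\,|\phi(p,t)|\le Ce^{-2t}\le C<\pi/2$ for all $t$, so $\sqrt{\kappa}\,\phi$ remains inside a fixed compact subinterval of $(-\pi/2,\pi/2)$ and $\tan^2(\sqrt{\kappa}\,\phi)\le\tan^2 C$ uniformly. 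Feeding these three uniform bounds into the displayed identity shows that $\kappa|h|^2$, and hence $|h|^2$, stays uniformly bounded on $[0,T)$, which is the assertion.

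The only genuinely delicate point---and the step I would emphasize---is keeping the tangent term finite, i.e. preventing $\sqrt{\kappa}\,\phi$ from reaching $\pm\pi/2$. This is precisely where the two preceding lemmas interlock: the preserved inequality $\kappa-K>\varepsilon$ from Lemma \ref{lemm kest} guarantees that the representation $\sqrt{\kappa}\,H=(\kappa-K)\tan(\sqrt{\kappa}\,\phi)$ remains valid for every $t\in[0,T)$, while the monotone exponential decay of $\phi$ from Lemma \ref{est phi} ensures the angle never leaves its already admissible initial range. Once this is secured, the rest of the argument is purely algebraic and requires no further estimates.
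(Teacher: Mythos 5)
Your proposal is correct and is essentially identical to the paper's own argument: the paper also bounds $|h|$ purely algebraically via the identity $\kappa|h|^2=\kappa H^2-2\kappa K=(\kappa-K)^2\tan^2(\sqrt{\kappa}\,\phi)-2\kappa K$, combining the preserved bounds on $K$ from Lemma \ref{lemm kest} with the exponential decay of $\phi$ from Lemma \ref{est phi}. Your extra care in checking that $\sqrt{\kappa}\,\phi$ remains in a fixed compact subinterval of $(-\pi/2,\pi/2)$, so that the tangent term stays finite, is precisely the point the paper relies on (explicitly in the proof of Lemma \ref{lemm kest}), so no gap remains.
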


\subsection{$\bf C^1$-estimates}
Once we have uniform $C^2$-estimates it is easy to derive uniform $C^1$-estimates. Since the 
induced Riemannian metric $\dd gij$ evolves according to 
$$\dt\dd gij=2\phi\dd hij$$
and the second fundamental tensor $\dd hij$ is uniformly bounded, we conclude that there exists
a uniform constant $C'>0$ such that
$$\left|\dt\dd gij\right|\le C'|\phi|\dd gij$$
holds for all $t\in[0,T)$. By Lemma \ref{est phi} this can be further estimated and we obtain
$$\left|\dt\dd gij\right|\le Ce^{-t}\dd gij$$
with another uniform constant $C>0$. Thus
$$e^{-C(1-e^{-t})}\dd gij(p,0)\le \dd gij(p,t)\le e^{C(1-e^{-t})}\dd gij(p,0)$$
for all $(p,t)\in M\times [0,T)$. In particular, all induced metrics are uniformly equivalent to the
initial metric.

\subsection{Longtime existence and convergence}
Using the key estimates obtained in the previous subsections, we are now able to
prove Theorem \ref{theo main2}.

{\bf Proof of Theorem \ref{theo main2}:}
Since we have uniform $C^2$-{estimates we  proceed as in \cite{Andrews} (see also \cite{Andrews2} for details) 
and use a parabolic variant of the classical Morrey and Nirenberg estimates for fully nonlinear elliptic
equations in two space variables to obtain uniform
$C^{2,\alpha}$-estimates in space and $C^{1,\alpha}$-estimates in time. Schauder theory then implies uniform $C^k$-estimates in space  and time for any $k\ge 1$ and hence longtime existence of a smooth solution.  Since in local coordinates we have $\dt F^\alpha=\phi\nu^\alpha$ we compute for
any $0\le t_1\le t_2<T$
\begin{eqnarray}
\left|F^\alpha(p,t_2)-F^\alpha(p,t_1)\right|
&=&\left|\int\limits_{t_1}^{t_2}\phi(p,t)\nu^\alpha(p,t)dt\right|\nonumber\\
&\le&C(e^{-2t_1}-e^{-2t_2})\label{eq expon}
\end{eqnarray}
with some constant $C>0$ independent of $t_1,t_2$, where we have used Lemma \ref{est phi} and the fact that all
induced Riemannian metrics are uniformly equivalent. 
Thus for any $\epsilon>0$ and any $p\in M$ there exists some 
$t_1>0$ such that for all $t_2\ge t_1$ the points $F(p,t_2)$ and $F(p,t_1)$ lie in the same coordinate chart and
the euclidean distance $|F(p,t_2)-F(p,t_1)|$ in this coordinate chart
is bounded by $\epsilon$. Since $M$ is compact we obtain uniform and by (\ref{eq expon}) exponential convergence
of $M_t=F(M,t)$ to a smooth limiting surface $M_\infty\subset N$ as $t\to\infty$. $M_\infty$ is spacelike since all
induced Riemannian metrics stay uniformly equivalent.  Since by Lemma \ref{lemm kest} the estimate $\kappa-K>\epsilon$ holds 
for some $\epsilon >0$ and all $t\ge 0$ we can express $\phi$ in the form
 $\phi=\frac{1}{\sqrt{\kappa}}\arctan\frac{\sqrt{\kappa}H}{\kappa-K}$ 
for all $t\in[0,\infty)$. Then Lemma \ref{est phi}  implies that $\phi$ and $H$ tend to zero as $t\to\infty$ and hence
the mean curvature $H$ of the limiting surface vanishes. Again by Lemma \ref{lemm kest} we obtain that the Gau\ss\ curvature of the limiting surface $M_\infty$ satisfies $-\kappa\le K\le\kappa-\epsilon<\kappa$. This proves Theorem \ref{theo main2}.
\hfill$\square$

\section{Appendix}\label{sec 5}
With the same notations as before let us now assume that $F:M\to N$ is a spacelike immersion of
a surface $M$ into the anti-De Sitter manifold $N=\ads$,  represented by
$$\ads=\{V\in\mathbb{R}_2^4:\sca VV=-1\}$$
with its induced Lorentzian metric $\scb\cdot\cdot$ that it inherits from
$\mathbb{R}_2^4=\bigl(\real{4},\sca\cdot\cdot\bigr)$, where the inner product on $\mathbb{R}_2^4$
is given by
$$\sca VW=V^1W^1+V^2W^2-V^3W^3-V^4W^4\,.$$
$\ads$ is a $3$-dimensional Lorentzian space form of constant sectional
curvature $-1$ and is a vacuum solution of Einstein's equation with cosmological
constant $\Lambda<0$. $\ads$ contains closed timelike
curves and hence is not simply connected. The simply connected universal
cover of $\ads$ will also be called anti-De Sitter space and we denote it  $\widetilde{\ads}$.

Thus an immersion $F:M\to\ads$ can also be seen as an immersion of $M$ into $\mathbb{R}_2^4$.
The Gau\ss\ map of $F$ is the map
$$\mathscr{G}:M\to Gr_2^+(2,4)\,,\quad p\mapsto T_pM\in Gr_2^+(2,4)\,,$$
where $T_pM$ is considered as an oriented spacelike surface in $\mathbb{R}_2^4$
and $Gr_2^+(2,4)$ denotes the Grassmannian of oriented spacelike surfaces in $\mathbb{R}_2^4$.
It is well known that  the Grassmannian $Gr_2^+(2,4)$ is isometric to $\mathbb{H}_{1/\sqrt{2}}\times\mathbb{H}_{1/\sqrt{2}}$, where $\mathbb{H}_{1/\sqrt{2}}$
denotes the scaled hyperbolic plane
$$\mathbb{H}_{1/\sqrt{2}}:=\left\{V\in\mathbb{R}_1^3:\scc VV=-\frac{1}{2}\,, V^3>0\right\}\,,$$
where $\bigl(\mathbb{R}_1^3,\scc\cdot\cdot\bigr)$ denotes the usual Minkowski space.
To understand the geometry of the Gau\ss\ map $\mathscr{G}$ it is convenient to use the
isometry between $Gr_2^+(2,4)$ and $\mathbb{H}_{1/\sqrt{2}}\times\mathbb{H}_{1/\sqrt{2}}$
(since the sectional curvature of $\mathbb{H}_{1/\sqrt{2}}$ is $-2$, $Gr_2^+(2,4)$ is a K\"ahler-Einstein
manifold of scalar curvature $S=-8$).

Let $e_1,e_2,e_3,e_4$ denote the standard basis of $\real{4}$.
We introduce a set of endomorphisms on $\real{4}$:
\begin{eqnarray}
E_+^1:\,
\begin{pmatrix} e_1\\e_2\\e_3\\e_4\end{pmatrix}
\mapsto
\begin{pmatrix} e_4\\-e_3\\-e_2\\e_1\end{pmatrix},&\quad&E_-^1:\,
\begin{pmatrix} e_1\\e_2\\e_3\\e_4\end{pmatrix}
\mapsto
\begin{pmatrix} -e_4\\-e_3\\-e_2\\-e_1\end{pmatrix}
,\nonumber\\[10pt]
E_+^2:\,
\begin{pmatrix} e_1\\e_2\\e_3\\e_4\end{pmatrix}
\mapsto
\begin{pmatrix} -e_3\\-e_4\\-e_1\\-e_2\end{pmatrix},&\quad&E_-^2:\,
\begin{pmatrix} e_1\\e_2\\e_3\\e_4\end{pmatrix}
\mapsto
\begin{pmatrix} -e_3\\e_4\\-e_1\\e_2\end{pmatrix}
,\nonumber\\[10pt]
E_+^3:\,
\begin{pmatrix} e_1\\e_2\\e_3\\e_4\end{pmatrix}
\mapsto
\begin{pmatrix} e_2\\-e_1\\-e_4\\e_3\end{pmatrix},&\quad&E_-^3:\,
\begin{pmatrix} e_1\\e_2\\e_3\\e_4\end{pmatrix}
\mapsto
\begin{pmatrix} e_2\\-e_1\\e_4\\-e_3\end{pmatrix}
,\nonumber\\
\end{eqnarray}
These endomorphisms satisfy
$$E_+^1E_+^2=-E^2_+E^1_+=E^3_+\,,\quad (E^1_+)^2=(E^2_+)^2=-(E^3_+)^2=\operatorname{Id}\,,$$
$$E_-^1E_-^2=-E^2_-E^1_-=E^3_-\,,\quad (E^1_-)^2=(E^2_-)^2=-(E^3_-)^2=\operatorname{Id}$$
Moreover, if $V\in\real{4}$ is an arbitrary nonzero vector, then
$$\{V,E_+^1V,E_+^2V,E_+^3V\}$$
forms a positively oriented basis and
$$\{V,E_-^1V,E_-^2V,E_-^3V\}$$
a negatively oriented basis of $\real{4}$.
Associated to these endomorphisms are the following six symplectic $2$-forms:
$$\omega^A_+:=\sca {E^A_+\cdot}\cdot\quad\text{and}\quad\omega^A_-:=\sca{E^A_-\cdot}\cdot\,,\quad A=1,2,3$$
and we have
\begin{eqnarray}
&\omega^1_+=e_2\wedge e_3+e_4\wedge e_1,\quad&\omega^1_-=e_2\wedge e_3-e_4\wedge e_1,\nonumber\\
&\omega^2_+=e_1\wedge e_3+e_2\wedge e_4,\quad&\omega^2_-=e_1\wedge e_3-e_2\wedge e_4,\nonumber\\
&\omega^3_+=e_1\wedge e_2+e_3\wedge e_4,\quad&\omega^3_-=e_1\wedge e_2-e_3\wedge e_4.\nonumber
\end{eqnarray}
For any spacelike unit vector $e$ and any two vectors $V,W$ we have
\begin{eqnarray}
\sca VW&=&-\omega^1_+(e,V)\omega^1_+(e,W)-\omega^2_+(e,V)\omega^2_+(e,W)\nonumber\\
&&+\omega^3_+(e,V)\omega^3_+(e,W)+\sca eV\sca eW\nonumber\\
&=&-\omega^1_-(e,V)\omega^1_-(e,W)-\omega^2_-(e,V)\omega^2_-(e,W)\nonumber\\
&&+\omega^3_-(e,V)\omega^3_-(e,W)+\sca eV\sca eW.\label{eq decomp}
\end{eqnarray}
Let $\nu_N$ denote the future directed timelike unit normal along $\ads\subset\mathbb{R}_2^4$.
The orientation of $\real{4}$ induces an orientation on $\ads$ in the following way: We say that $V_1,V_2,V_3\in  T_q\hspace{-2pt}\ads$ is positively oriented, if $V_1,V_2,V_3,\nu_N$ represents the positive orientation of $\real{4}$.

Now let $F:M\to\ads$ be a spacelike immersion of an oriented surface and let $\nu$ denote the future
directed timelike unit normal of $M$ within $\ads$. We will assume that the orientation of $M$ is 
chosen in such a way that for any positively oriented basis $\{e_1,e_2\}$ of $T_pM$ the
basis $\{DF(e_1),DF(e_2),\nu\}$ represents the positive orientation of $T_{F(p)}\hspace{-2pt}\ads$.

For such an immersion let us define the six functions
$$\mathscr{G}_+^A:M\to\real{}\,,\quad \mathscr{G}_+^A:=\frac{1}{\sqrt{2}}\hodge\left(F^*\omega_+^A\right)\,,\quad A=1,2,3$$
and
$$\mathscr{G}_-^A:M\to\real{}\,,\quad \mathscr{G}_-^A:=\frac{1}{\sqrt{2}}\hodge\left(F^*\omega_-^A\right)\,,\quad A=1,2,3\,,$$
where $F^*$ denotes "pull-back" and $\ast$ is the Hodge-Operator on forms.
From equation (\ref{eq decomp}) one immediately gets \footnote{Choose an arbitrary unit tangent vector $e$ of $T_pM$ and let $V=W=Ce$, where $C$ denotes the complex structure on $M$ induced by
the orientation and Riemannian metric on $M$}
\begin{equation}\label{eq gaussmap}
(\mathscr{G}_+^1)^2+(\mathscr{G}_+^2)^2-(\mathscr{G}_+^3)^2=-\frac{1}{2}=(\mathscr{G}_-^1)^2+(\mathscr{G}_-^2)^2-(\mathscr{G}_-^3)^2
\end{equation}
and by construction we have $\mathscr{G}^3_\pm>0$,  so that
$$\mathscr{G}_+=(\mathscr{G}_+^1,\mathscr{G}_+^2,\mathscr{G}_+^3)\quad\text{and}\quad
\mathscr{G}_-=(\mathscr{G}_-^1,\mathscr{G}_-^2,\mathscr{G}_-^3)$$
define two functions from $M$ to $\mathbb{H}_{1/\sqrt{2}}$. $\mathscr{G}_+,\mathscr{G}_-$
are called the self-dual resp. the anti-self-dual Gau\ss\ maps of $F$ and the Gau\ss\ map
$\mathscr{G}:M\to Gr^+_2(2,4)=\mathbb{H}_{1/\sqrt{2}}\times\mathbb{H}_{1/\sqrt{2}}$
is given by the pair $\mathscr{G}=(\mathscr{G}_+,\mathscr{G}_-)$.

Let 
$$\langle\cdot,\cdot\rangle_{~_{\hspace{-6pt} Gr_2^+(2,4)}}\,,\quad\mathscr{J}\quad\text{and}\quad
\omega=\langle\mathscr{J}\cdot,\cdot\rangle_{~_{\hspace{-6pt} Gr_2^+(2,4)}}$$ 
denote the K\"ahler metric, complex structure and K\"ahler form on the Grassmannian $Gr_2^+(2,4)=\mathbb{H}_{1/\sqrt{2}}\times\mathbb{H}_{1/\sqrt{2}}$.

As was shown in \cite{Torralbo}, \cite{Torralbo-Urbano}, we have
$\mathscr{G}^*\omega=0$, i.e. the Gau\ss\ map of an immersion $F:M\to\ads$ defines a Lagrangian immersion $\mathscr{G}:M\to Gr_2^+(2,4)$.

Let
$$\dd \sigma ij dx^i\otimes dx^j=\mathscr{G}^*\langle\cdot,\cdot\rangle_{~_{\hspace{-6pt} Gr_2^+(2,4)}}$$
denote the Riemannian metric on $M$ induced by the Gau\ss\ map. If $D$ denotes the connection
associated to $\sigma$, then it is well known that the second fundamental tensor
$$\ddd\tau ijk=\omega (D_i\mathscr{G},D_jD_k\mathscr{G})$$
of the Lagrangian immersion is completely symmetric and that the mean curvature form
$\tau=\tau_idx^i$ on $M$, i.e. its trace $\tau_i=\uu\sigma jk\ddd\tau ijk$ (where
$(\uu\sigma jk)_{j,k=1,2}$ denotes the inverse of $(\dd\sigma jk)_{j,k=1,2}$), is closed.

The first and second fundamental forms on $M$ induced by $F$
shall be denoted (as before) by $\dd gij dx^i\otimes dx^j$ and $\dd hij dx^i\otimes dx^j$.
If we consider $F$ as a map from $M$ to $\mathbb{R}_2^4$, then the Gau\ss\ formula shows
that the second fundamental tensor $\tilde A$ of $M$, considered as a submanifold of codimension two
in $\mathbb{R}_2^4$, decomposes into
\begin{equation}\label{eq gaussmapdec}
\dd{\tilde A}ij=\dd gij\nu_N+\dd hij\nu\,.
\end{equation}
\begin{lemma}
Let $F:M\to\ads$ be a spacelike immersion. With the same notations as above 
the following relations between the first and second fundamental forms of $F$ and $\mathscr{G}$
are valid:
\begin{eqnarray}
\dd\sigma ij&=&\dd gij+\uu gkl\dd hik\dd hjl\,,\label{eq rela1}\\
\ddd\tau ijk&=&\nabla_i\dd hjk\,,\label{eq rela2}
\end{eqnarray}
where $\nabla$ denotes the Levi-Civita connection of $\dd gij$.
\end{lemma}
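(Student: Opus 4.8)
The plan is to establish both identities pointwise at an arbitrary $p\in M$, using that $\dd gij$, $\dd hij$, $\dd\sigma ij$ and $\ddd\tau ijk$ are all tensors on $M$ that are unchanged when we postcompose $F$ with an isometry of $\mathbb{R}_2^4$ preserving $\ads$ (such an isometry acts isometrically on $Gr_2^+(2,4)=\mathbb{H}_{1/\sqrt2}\times\mathbb{H}_{1/\sqrt2}$, hence on all the Gau\ss\ map data). First I would fix normal coordinates for $\dd gij$ at $p$ and diagonalise the second fundamental form, $\dd hij(p)=\operatorname{diag}(\lambda_1,\lambda_2)$, so that $F_i:=\partial_iF$ is orthonormal at $p$. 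After applying a suitable element of $O(2,2)$ I may assume that the adapted frame $\{F_1,F_2,\nu,\nu_N\}$ agrees at $F(p)$ with the standard basis $\{e_1,e_2,e_3,e_4\}$ of $\real{4}$. I record for later use that $\nu_N$ is, up to sign, the position vector of $\ads$, whence $\partial_i\nu_N=\pm F_i$, while $\partial_i\nu=\ud hki F_k$ by the Weingarten relation (the $\nu_N$-component of $\partial_i\nu$ vanishes since $\sca\nu{\nu_N}\equiv 0$).

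For the first identity I would differentiate $\mathscr{G}_\pm^A=\frac{1}{\sqrt2}\hodge\left(F^*\omega_\pm^A\right)$ once. Since the forms $\omega_\pm^A$ are parallel in the flat space $\mathbb{R}_2^4$, Gau\ss' formula (\ref{eq gaussmapdec}) together with $\dd hij(p)$ diagonal gives at $p$ the compact expressions $\partial_1\mathscr{G}_\pm^A=\frac1{\sqrt2}\omega_\pm^A(\nu_N+\lambda_1\nu,F_2)$ and $\partial_2\mathscr{G}_\pm^A=\frac1{\sqrt2}\omega_\pm^A(F_1,\nu_N+\lambda_2\nu)$ (the Hodge factor $1/\sqrt{\det g}$ contributes nothing at first order in normal coordinates). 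The induced metric is $\dd\sigma ij=\scc{\partial_i\mathscr{G}_+}{\partial_j\mathscr{G}_+}+\scc{\partial_i\mathscr{G}_-}{\partial_j\mathscr{G}_-}$. The diagonal entries then follow at once from the decomposition (\ref{eq decomp}) applied with $e=F_2$ (resp. $e=F_1$): because $\nu_N+\lambda_i\nu$ is orthogonal to $F_1,F_2$ and has squared norm $\sca{\nu_N+\lambda_i\nu}{\nu_N+\lambda_i\nu}=-(1+\lambda_i^2)$, each factor contributes $\tfrac12(1+\lambda_i^2)$ and one gets $\dd\sigma ii=1+\lambda_i^2$. The off-diagonal term vanishes by the explicit action of the $E_\pm^A$ on $\{e_1,e_2,e_3,e_4\}$, the two factors contributing $\pm\tfrac12(\lambda_1-\lambda_2)$ which cancel. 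This gives $\dd\sigma ij=\dd gij+\uu gkl\dd hik\dd hjl$ at $p$, hence everywhere, which is (\ref{eq rela1}).

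For the second identity I would compute the full second fundamental form $B_{jk}:=D_jD_k\mathscr{G}$ of the Lagrangian immersion, a section of the normal bundle of $\mathscr{G}(M)$ in $\mathbb{H}_{1/\sqrt2}\times\mathbb{H}_{1/\sqrt2}\subset\mathbb{R}_1^3\times\mathbb{R}_1^3$, and then evaluate $\ddd\tau ijk=\omega(\partial_i\mathscr{G},B_{jk})$, i.e. the product-metric inner product of $\mathscr{J}\partial_i\mathscr{G}$ with $B_{jk}$. Concretely I would differentiate the first-order expressions above once more: the derivative of $\nu_N+\lambda_j\nu$ produces the term $(\partial_i\lambda_j)\nu$ together with $F_i$- and $\ud hki F_k$-contributions, and after passing to covariant derivatives the symmetric combination reorganises into $\nabla_i\dd hjk$, with Codazzi's equation (\ref{eq codazzi}) supplying exactly the full symmetry in $i,j,k$ that $\ddd\tau ijk$ must have. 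The remaining task is to strip off the part of $\partial_i\partial_j\mathscr{G}$ tangent to $\mathscr{G}(M)$ (the Levi-Civita connection of $\dd\sigma ij$) and the part along the radial timelike normal of each $\mathbb{H}_{1/\sqrt2}$ factor, and then to apply $\mathscr{J}$, which at the base point $\bigl(\tfrac1{\sqrt2}(0,0,1),\tfrac1{\sqrt2}(0,0,1)\bigr)$ is the $\tfrac\pi2$-rotation in each tangent plane. Carrying this out yields $\ddd\tau ijk=\nabla_i\dd hjk$, which is (\ref{eq rela2}).

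The main obstacle is the second-order bookkeeping in the last step, where three effects must be tracked simultaneously and shown to combine cleanly: the second derivatives of the Hodge/volume factor $1/\sqrt{\det g}$, which by the Gau\ss\ equation (\ref{eq gauss}) introduce intrinsic curvature terms; the Christoffel symbols of the target $\mathbb{H}_{1/\sqrt2}\times\mathbb{H}_{1/\sqrt2}$, needed to project $\partial_i\partial_j\mathscr{G}$ onto the normal bundle; and the correct relative sign between the two $\mathbb{H}_{1/\sqrt2}$ factors entering $\mathscr{J}$ and $\omega$. A useful consistency check, and a safeguard against sign errors, is the trace: contracting (\ref{eq rela2}) with $\uu\sigma jk$ and using $\partial\phi/\partial\dd hjk=\uu\sigma jk$ from (\ref{eq fpart1}) gives $\tau_i=\uu\sigma jk\ddd\tau ijk=\uu\sigma jk\nabla_i\dd hjk=\nabla_i\phi$, so that the mean curvature form of $\mathscr{G}$ is precisely $d\phi$, as it must be.
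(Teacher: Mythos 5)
Your proposal is correct and takes essentially the same route as the paper, whose entire proof of this lemma is the single sentence ``Straightforward computations using (\ref{eq decomp}), (\ref{eq gaussmap}) and (\ref{eq gaussmapdec})'': you simply carry those computations out in an adapted frame, and your treatment of (\ref{eq rela1}) is complete and exact (each hyperbolic factor contributing $\tfrac12(1+\lambda_i^2)$ on the diagonal and $\pm\tfrac12(\lambda_1-\lambda_2)$ off the diagonal, which indeed cancel), while your outline of (\ref{eq rela2}) has the right structure and correctly assigns to Codazzi the role of producing the full symmetry of $\tau_{ijk}$. The second-order ``obstacles'' you flag at the end are milder than you fear --- since each $\mathscr{G}^A_\pm$ is a scalar and the Hodge star commutes with $\nabla$, one can compute $\nabla_j\nabla_k\mathscr{G}^A_\pm$ covariantly with no curvature contribution from the volume factor, and because $\omega$ vanishes on pairs of tangent vectors (the Gau\ss\ map is Lagrangian) while $\mathscr{J}\,\partial_i\mathscr{G}$ is tangent to the Grassmannian, both the $\sigma$-tangential part and the radial components of $\partial_j\partial_k\mathscr{G}$ are annihilated automatically, leaving $\tau_{ijk}=\langle\mathscr{J}\,\partial_i\mathscr{G},\partial_j\partial_k\mathscr{G}\rangle$ with no Christoffel bookkeeping at all --- so the one point genuinely requiring care is the last one you name, the relative sign of the two $\mathbb{H}_{1/\sqrt{2}}$ factors entering $\mathscr{J}$ and $\omega$, which is forced by the Lagrangian condition itself and, if chosen wrongly, flips the overall sign of $\tau_{ijk}$.
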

\begin{proof}
Straightforward computations using (\ref{eq decomp}), (\ref{eq gaussmap}) and (\ref{eq gaussmapdec}).
\end{proof}
In particular, we observe that $\dd\sigma ij$ coincides with the tensor defined earlier in equation
(\ref{defi sigma}) since in this special situation we have $\kappa=1$.
As a corollary we obtain:
\begin{lemma}\label{lemm app1}
The Maslov class of the Gau\ss\ map $\mathscr{G}$ is trivial and the Lagrangian angle is
given by $\phi=\arctan\lambda_1+\arctan\lambda_2$, where $\lambda_1,\lambda_2$ are the 
principal curvatures of $F:M\to\ads$.
\end{lemma}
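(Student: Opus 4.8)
The plan is to reduce everything to the two identities (\ref{eq rela1}) and (\ref{eq rela2}) just established, together with the fact recorded in (\ref{eq fpart1}) that $\partial\phi/\partial\dd hkl=\uu\sigma kl$. First I would observe that in the present situation $N=\ads$ has $\kappa=1$, so that the general definition (\ref{eq deff}) specializes to $\phi=\arctan\lambda_1+\arctan\lambda_2$; this already produces the claimed formula, provided I can identify this $\phi$ with the honest Lagrangian angle of $\mathscr{G}$. The decisive point is therefore to compute the mean curvature one-form $\tau_i=\uu\sigma jk\ddd\tau ijk$ of the Lagrangian immersion $\mathscr{G}$ and show that it equals $d\phi$.

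Using (\ref{eq rela2}), namely $\ddd\tau ijk=\nabla_i\dd hjk$, I obtain
\begin{equation}\nonumber
\tau_i=\uu\sigma jk\nabla_i\dd hjk\,.
\end{equation}
On the other hand, $\phi$ is a function of the $\dd hij$ and the $\uu gij$, and since $\nabla_i\uu gjk=0$ the chain rule (\ref{eq hypgrad}) combined with (\ref{eq fpart1}) gives
\begin{equation}\nonumber
\nabla_i\phi=\frac{\partial\phi}{\partial\dd hkl}\nabla_i\dd hkl=\uu\sigma kl\nabla_i\dd hkl\,.
\end{equation}
Comparing the two displays yields $\tau_i=\nabla_i\phi$, that is $\tau=d\phi$. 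Along the way I would also note that $\ddd\tau ijk=\nabla_i\dd hjk$ is totally symmetric: Codazzi's equation (\ref{eq codazzi}) provides symmetry in $i,j$ while the symmetry of $\dd hjk$ gives it in $j,k$, which matches the stated symmetry of the Lagrangian second fundamental form and confirms that the trace defining $\tau_i$ is taken consistently.

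Finally, since $\phi$ is by Lemma \ref{lemm anglesmooth} a globally defined smooth real-valued function on $M$, the closed one-form $\tau=d\phi$ is exact, so its de Rham class vanishes; as the Maslov class is, up to the usual normalization, the class of $\tau$ in $H^1(M;\real{})$, it is trivial, and $\phi$ is a single-valued Lagrangian angle. I do not expect any serious obstacle, because the entire analytic content sits in the relations (\ref{eq rela1}), (\ref{eq rela2}) of the previous lemma and in (\ref{eq fpart1}). The only point needing a small check is the compatibility of the two meanings of $\sigma$ — the metric induced by the Gau\ss\ map and the tensor (\ref{defi sigma}) — but this is exactly the content of (\ref{eq rela1}) with $\kappa=1$, already noted in the text preceding the statement. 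Thus the genuinely hard work, producing the explicit induced metric and second fundamental form of $\mathscr{G}$, has been discharged, and what remains is the short trace computation above.
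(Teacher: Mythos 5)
Your proposal is correct and follows essentially the same route as the paper: both compute $\nabla_k\phi=\uu\sigma ij\nabla_k\dd hij$ via the chain rule and (\ref{eq fpart1}), identify this with $\tau_k$ using (\ref{eq rela2}), and conclude the Maslov class is trivial because $\tau=d\phi$ is exact with $\phi$ smooth by Lemma \ref{lemm anglesmooth}. The only difference is cosmetic (you compute $\tau_i$ first and $d\phi$ second, and add the remark on total symmetry of $\ddd\tau ijk$ via Codazzi, which the paper leaves implicit).
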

\begin{proof}
We have seen in Lemma \ref{lemm anglesmooth} that $\phi=\arctan\lambda_1+\arctan\lambda_2$
is a smooth function. Moreover we have $\frac{\partial\phi}{\partial\dd hij}=\uu\sigma ij$ and then
\begin{eqnarray}
\nabla_k\phi&=&\frac{\partial\phi}{\partial\dd hij}\nabla_k\dd hij+\frac{\partial\phi}{\partial\uu gij}\nabla_k\uu gij\nonumber\\
&=&\uu\sigma ij\nabla_k\dd hij\nonumber\\
&\overset{(\ref{eq rela2})}{=}&\uu\sigma ij\ddd\tau kij\nonumber\\
&=&\tau_k\,.\nonumber
\end{eqnarray}
This means that the mean curvature form $\tau$ of the Gau\ss\ map satisfies $\tau=d\phi$.
Since $\tau/\pi$ represents the Maslov class, it must be trivial.
\end{proof}

We will now treat the case where $F:M\times[0,T)\to\ads$ is a smooth family of
spacelike immersions satisfying an evolution equation of the form
$$\dt F=f\nu\,,$$
where $f$ is an arbitrary smooth function and $\nu$ the future directed timelike unit normal.
The Gau\ss\ maps of $F$ depend on $t$ and will vary in time. A straightforward computation
gives the two relations
$$
\left\langle \dt\mathscr{G},D_k\mathscr{G}\right\rangle_{~_{\hspace{-6pt} Gr_2^+(2,4)}}
=\uu gml\dd hlk\nabla_mf$$
and
$$
\left\langle \dt\mathscr{G},\mathscr{J}D_k\mathscr{G}\right\rangle_{~_{\hspace{-6pt} Gr_2^+(2,4)}}
=\nabla_kf\,,$$
so that
\begin{equation}\nonumber
\dt\mathscr{G}=\mathscr{J}\left(\uu\sigma kl\nabla_kfD_l\mathscr{G}\right)+\uu\sigma kl\uu gms\dd hskD_mfD_l\mathscr{G}\,.
\end{equation}
So we have shown:
\begin{lemma}\label{lemm app2}
Suppose $F:M\times[0,T)\to\ads$ is a smooth family of spacelike immersions driven by the flow
$\dt F=f\nu$, where $\nu$ denotes the future directed timelike unit normal. Then the Gau\ss\
maps $\mathscr{G}_F:M\times[0,T)\to Gr_2^+(2,4)$ of $F$ evolve according to
\begin{equation}\label{eq evolgauss}
\dt\mathscr{G}_F=\left(\mathscr{J}\circ d\mathscr{G}_F+d\mathscr{G}_F\circ\mathscr{W}_F\right)\nabla^\sigma f\,.
\end{equation}
where$\mathscr{J}$ denotes the complex structure on $Gr_2^+(2,4)$, 
$\mathscr{W}_F$ is the Weingarten map of $F$
and $\nabla^\sigma f$ denotes the gradient of $f$ w.r.t. the
induced metric $\sigma=\mathscr{G}^*\left\langle \cdot,\cdot\right\rangle_{~_{\hspace{-6pt} Gr_2^+(2,4)}}$.
\end{lemma}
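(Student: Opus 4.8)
The plan is to compute $\dt\mathscr{G}$ explicitly and then to read off its decomposition in the frame naturally attached to the Lagrangian immersion $\mathscr{G}$. First observe that, by (\ref{eq rela1}), the induced metric $\dd\sigma ij=\dd gij+\uu gkl\dd hik\dd hjl$ is positive definite, since $\dd gij$ is positive definite and $\uu gkl\dd hik\dd hjl$ is positive semidefinite; hence $\mathscr{G}$ is an immersion and, because $Gr_2^+(2,4)$ is four-dimensional while $M$ is two-dimensional, the four vectors $D_1\mathscr{G},D_2\mathscr{G},\mathscr{J}D_1\mathscr{G},\mathscr{J}D_2\mathscr{G}$ form a frame along $\mathscr{G}(M)$. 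Writing $\langle\cdot,\cdot\rangle$ for the K\"ahler metric on $Gr_2^+(2,4)$, the Lagrangian condition $\mathscr{G}^*\omega=0$ together with $\omega=\langle\mathscr{J}\cdot,\cdot\rangle$ yields the orthogonality $\langle\mathscr{J}D_l\mathscr{G},D_k\mathscr{G}\rangle=0$, while $\langle D_l\mathscr{G},D_k\mathscr{G}\rangle=\dd\sigma lk$ and, since $\mathscr{J}$ is an isometry, $\langle\mathscr{J}D_l\mathscr{G},\mathscr{J}D_k\mathscr{G}\rangle=\dd\sigma lk$. Thus $\dt\mathscr{G}$ is completely determined by its inner products with $D_k\mathscr{G}$ and $\mathscr{J}D_k\mathscr{G}$.

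The core step is to establish the two scalar relations
\[
\langle\dt\mathscr{G},D_k\mathscr{G}\rangle=\uu gml\dd hlk\nabla_mf\,,\qquad
\langle\dt\mathscr{G},\mathscr{J}D_k\mathscr{G}\rangle=\nabla_kf\,.
\]
These follow by differentiating the explicit expressions $\mathscr{G}_\pm^A=\frac{1}{\sqrt2}\hodge(F^*\omega_\pm^A)$ in $t$ and inserting the flow $\dt F=f\nu$, using the codimension-two Gau\ss\ formula (\ref{eq gaussmapdec}) $\dd{\tilde A}ij=\dd gij\nu_N+\dd hij\nu$ and the pointwise identity (\ref{eq decomp}). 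The tangential part of $\dt(dF)$ produces the derivatives $\nabla_mf$, and the normal motion of the adapted frame produces the factors $\dd hlk$. I expect this to be the main obstacle: it is the only step requiring genuine geometry rather than linear algebra, and the bookkeeping of the Hodge star, the pull-back and the two normals $\nu,\nu_N$ has to be carried out carefully (this is exactly the ``straightforward computation'' asserted just above the statement).

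Granting the two relations, the decomposition is pure linear algebra. Writing $\dt\mathscr{G}=a^lD_l\mathscr{G}+b^l\mathscr{J}D_l\mathscr{G}$ and pairing with $D_k\mathscr{G}$ and $\mathscr{J}D_k\mathscr{G}$, the orthogonality relations give $a^l\dd\sigma lk=\uu gml\dd hlk\nabla_mf$ and $b^l\dd\sigma lk=\nabla_kf$, hence $b^l=\uu\sigma lk\nabla_kf$ and $a^l=\uu\sigma lk\uu gms\dd hsk\nabla_mf$. This reproduces the combined formula
\[
\dt\mathscr{G}=\mathscr{J}\bigl(\uu\sigma kl\nabla_kf\,D_l\mathscr{G}\bigr)+\uu\sigma kl\uu gms\dd hsk\nabla_mf\,D_l\mathscr{G}
\]
stated above.

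It remains to recast this in the invariant form (\ref{eq evolgauss}). Since $(\nabla^\sigma f)^l=\uu\sigma lk\nabla_kf$, the first term is $\mathscr{J}\bigl(d\mathscr{G}(\nabla^\sigma f)\bigr)=\mathscr{J}\circ d\mathscr{G}_F\,\nabla^\sigma f$. For the second term the only nontrivial point is the identity $\uu\sigma kl\uu gms\dd hsk=\uu glp\dd hpk\uu\sigma km$, i.e. that the Weingarten endomorphism $\mathscr{W}_F=g^{-1}h$ commutes with $\sigma^{-1}$. This holds because (\ref{defi sigma})/(\ref{eq rela1}) with $\kappa=1$ give $\uu gik\dd\sigma kj=\delta^i_j+\ud{\mathscr{W}}il\ud{\mathscr{W}}lj$, so that $g^{-1}\sigma=\operatorname{Id}+\mathscr{W}_F^2$ is a polynomial in $\mathscr{W}_F$ and therefore commutes with it; in a $\mathscr{W}_F$-eigenbasis both sides reduce to $\frac{\lambda_l}{1+\lambda_l^2}\nabla_lf$. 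Consequently the second term equals $d\mathscr{G}_F(\mathscr{W}_F\nabla^\sigma f)=d\mathscr{G}_F\circ\mathscr{W}_F\,\nabla^\sigma f$, and summing the two terms gives precisely $\dt\mathscr{G}_F=(\mathscr{J}\circ d\mathscr{G}_F+d\mathscr{G}_F\circ\mathscr{W}_F)\nabla^\sigma f$.
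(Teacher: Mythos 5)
Your proposal is correct and follows essentially the same route as the paper: the paper's proof consists precisely of the two inner-product identities $\left\langle\dt\mathscr{G},D_k\mathscr{G}\right\rangle=\uu gml\dd hlk\nabla_mf$ and $\left\langle\dt\mathscr{G},\mathscr{J}D_k\mathscr{G}\right\rangle=\nabla_kf$ (which it, too, leaves as ``a straightforward computation''), followed by the same frame decomposition $\dt\mathscr{G}=\mathscr{J}\left(\uu\sigma kl\nabla_kf\,D_l\mathscr{G}\right)+\uu\sigma kl\uu gms\dd hsk\nabla_mf\,D_l\mathscr{G}$. If anything, you are more explicit than the paper about the two linear-algebra points it passes over silently, namely that the Lagrangian condition makes $\{D_k\mathscr{G},\mathscr{J}D_k\mathscr{G}\}$ an orthogonal frame, and that $\mathscr{W}_F$ commutes with $\sigma^{-1}$ (since $g^{-1}\sigma=\operatorname{Id}+\mathscr{W}_F^2$), which is what lets the tangential term be rewritten as $d\mathscr{G}_F\circ\mathscr{W}_F\,\nabla^\sigma f$ in (\ref{eq evolgauss}).
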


In particular, if we choose for $f$ the Lagrangian angle $\phi=\arctan\lambda_1+\arctan\lambda_2$,
then - up to the tangential term $(d\mathscr{G}_F\circ\mathscr{W}_F)\nabla^\sigma f$, which is of no interest concerning the geometric
evolution -
the Gau\ss\ maps evolve by the Lagrangian mean curvature flow.

\begin{bibdiv}
\begin{biblist}
\bib{Andrews}{article}{
   author={Andrews, B.},
   title={Positively curved surfaces in the three-sphere},
   conference={
      title={},
      address={Beijing},
      date={2002},
   },
   book={
      publisher={Higher Ed. Press},
      place={Beijing},
   },
   date={2002},
   pages={221--230},
}

\bib{Andrews2}{article}{
   author={Andrews, B.},
   title={Fully nonlinear parabolic equations in two space variables},
   journal={arXiv:math/0402235},
   date={2004},
}

\bib{Barbot}{article}{
   author={Barbot, Thierry},
   title={Causal properties of AdS-isometry groups. I. Causal actions and
   limit sets},
   journal={Adv. Theor. Math. Phys.},
   volume={12},
   date={2008},
   number={1},
   pages={1--66},
   issn={1095-0761},
}

\bib{Benedetti-Bonsante}{article}{
   author={Benedetti, Riccardo},
   author={Bonsante, Francesco},
   title={Canonical Wick rotations in 3-dimensional gravity},
   journal={Mem. Amer. Math. Soc.},
   volume={198},
   date={2009},
   number={926},
   pages={viii+164},
   issn={0065-9266},
   isbn={978-0-8218-4281-2},
}

\bib{Castro-Urbano}{article}{
   author={Castro, I.},
   author={Urbano, F.},
   title={Minimal Lagrangian surfaces in $\Bbb S\sp 2\times\Bbb S\sp 2$},
   journal={Comm. Anal. Geom.},
   volume={15},
   date={2007},
   number={2},
   pages={217--248},
   issn={1019-8385},
}

\bib{Ecker1}{article}{
   author={Ecker, K.},
   title={Interior estimates and longtime solutions for mean curvature flow
   of noncompact spacelike hypersurfaces in Minkowski space},
   journal={J. Differential Geom.},
   volume={46},
   date={1997},
   number={3},
   pages={481--498},
   issn={0022-040X},
}
		
\bib{Ecker2}{article}{
   author={Ecker, K.},
   title={On mean curvature flow of spacelike hypersurfaces in
   asymptotically flat spacetimes},
   journal={J. Austral. Math. Soc. Ser. A},
   volume={55},
   date={1993},
   number={1},
   pages={41--59},
   issn={0263-6115},
}

\bib{Ecker-Huisken}{article}{
   author={Ecker, K.},
   author={Huisken, G.},
   title={Parabolic methods for the construction of spacelike slices of
   prescribed mean curvature in cosmological spacetimes},
   journal={Comm. Math. Phys.},
   volume={135},
   date={1991},
   number={3},
   pages={595--613},
   issn={0010-3616},
}	

\bib{Gerhardt}{article}{
   author={Gerhardt, C.},
   title={Hypersurfaces of prescribed scalar curvature in Lorentzian
   manifolds},
   journal={J. Reine Angew. Math.},
   volume={554},
   date={2003},
   pages={157--199},
   issn={0075-4102},
}

\bib{Goldman}{article}{
   author={Goldman, William M.},
   title={Nonstandard Lorentz space forms},
   journal={J. Differential Geom.},
   volume={21},
   date={1985},
   number={2},
   pages={301--308},
   issn={0022-040X},
}

\bib{Kulkarni-Raymond}{article}{
   author={Kulkarni, Ravi S.},
   author={Raymond, Frank},
   title={$3$-dimensional Lorentz space-forms and Seifert fiber spaces},
   journal={J. Differential Geom.},
   volume={21},
   date={1985},
   number={2},
   pages={231--268},
   issn={0022-040X},
}

\bib{Mess}{article}{
   author={Mess, Geoffrey},
   title={Lorentz spacetimes of constant curvature},
   journal={Geom. Dedicata},
   volume={126},
   date={2007},
   pages={3--45},
   issn={0046-5755},
}

\bib{Mess-Notes}{article}{
   author={Andersson, Lars},
   author={Barbot, Thierry},
   author={Benedetti, Riccardo},
   author={Bonsante, Francesco},
   author={Goldman, William M.},
   author={Labourie, Fran{\c{c}}ois},
   author={Scannell, Kevin P.},
   author={Schlenker, Jean-Marc},
   title={Notes on: ``Lorentz spacetimes of constant curvature'' [Geom.
   Dedicata {\bf 126} (2007), 3--45; MR2328921] by G. Mess},
   journal={Geom. Dedicata},
   volume={126},
   date={2007},
   pages={47--70},
   issn={0046-5755},
}

\bib{Scannell}{article}{
   author={Scannell, Kevin P.},
   title={Flat conformal structures and the classification of de Sitter
   manifolds},
   journal={Comm. Anal. Geom.},
   volume={7},
   date={1999},
   number={2},
   pages={325--345},
   issn={1019-8385},
}

\bib{Smoczyk}{article}{
   author={Smoczyk, Knut},
   title={Longtime existence of the Lagrangian mean curvature flow},
   journal={Calc. Var. Partial Differential Equations},
   volume={20},
   date={2004},
   number={1},
   pages={25--46},
}

\bib{Smoczyk-Wang}{article}{
   author={Smoczyk, Knut},
   author={Wang, Mu-Tao},
   title={Mean curvature flows of Lagrangian submanifolds with convex
   potentials},
   journal={J. Differential Geom.},
   volume={62},
   date={2002},
   number={2},
   pages={243--257},
   issn={0022-040X},
}

\bib{Smoczyk-Wang2}{article}{
   author={Smoczyk, Knut},
   author={Wang, Mu-Tao},
   title={Generalized Lagrangian mean curvature flows in symplectic manifolds},
   journal={Asian J. Math.},
   volume={15},
   date={2011},
   number={1},
   pages={129--140},
}

\bib{Torralbo}{article}{
   author={Torralbo, F.},
   title={Minimal Lagrangian immersions in $\mathbb{RH}^2\times\mathbb{RH}^2$},
   conference={
      title={Symposium on the differential geometry of submanifolds},
      address={Valenciennes: Universit\'e de Valenciennes},
      date={2007},
   },
   date={2007},
   pages={217--220},
}

\bib{Torralbo-Urbano}{article}{
   author={Torralbo, F.},
   author={Urbano, F.},
   title={Surfaces with parallel mean curvature vector in $\mathbb{S}^2\times\mathbb{S}^2$ 
            and $\mathbb{H}^2\times\mathbb{H}^2$},
   journal={arXiv:0807.1808v2 [math.DG]},
   date={2008},
}

\bib{Tsui-Wang}{article}{
   author={Tsui, Mao-Pei},
   author={Wang, Mu-Tao},
   title={Mean curvature flows and isotopy of maps between spheres},
   journal={Comm. Pure Appl. Math.},
   volume={57},
   date={2004},
   number={8},
   pages={1110--1126},
   issn={0010-3640},
   review={\MR{2053760 (2005b:53110)}},
   doi={10.1002/cpa.20022},
}

\end{biblist}
\end{bibdiv}

\end{document}